%
%
%
%

\documentclass[smallextended]{svjour3}       

\smartqed
\usepackage{lineno,hyperref}

\usepackage{graphicx}
\usepackage{amsmath,amssymb}
\usepackage{subcaption}
\def\R{\mathbb{R}}

\def\Z{\mathbb{Z}}

\def\w{\mathbf{w}}

\def\y{\mathbf{y}}

\def\1{\mathbf{1}}
\def\0{\mathbf{0}}

\def\optlimits{\nolimits}

\usepackage{array, tabu}










\usepackage{amssymb}
\usepackage{amsmath}


\usepackage{dsfont} 
\usepackage{algorithm}
\usepackage{url}
\usepackage{graphicx, multicol, footmisc}
\usepackage{mathrsfs}
\usepackage{times}
\usepackage{epsfig}
{\vfill\null\thispagestyle{plain}}

\usepackage{algpseudocode}
\algdef{SE}[DOWHILE]{Do}{doWhile}{\algorithmicdo}[1]{\algorithmicwhile\ #1}

\usepackage[T1]{fontenc} 
\usepackage[utf8]{inputenc}
\usepackage{textcomp}
\usepackage[greek,french,english]{babel}
\usepackage{float}
\usepackage{dsfont}
\usepackage{array, tabu}
\usepackage{caption}

\pagestyle{empty}

\pagenumbering{gobble}
\usepackage{tikz}
\usetikzlibrary{decorations.pathreplacing}
\usepackage{bm}
\usepackage{enumerate}
\makeindex 

\usepackage{xparse}

\makeatletter
\NewDocumentCommand{\raisedminus}{m}{%
  \raisebox{0.2em}{$\m@th#1{-}$}%
}

\usepackage{hyperref}

\begin{document}

    \title{Learning Discontinuous Piecewise Affine Fitting Functions using Mixed Integer Programming for Segmentation and Denoising
}
%
%
\author{Ruobing Shen
\and
Bo Tang
\and
Leo Liberti
\and
Claudia D'Ambrosio
\and
Stéphane Canu
} 
%
\authorrunning{R. Shen et al.}
%
\institute{Ruobing Shen\at
Institue of Computer Science, Universität Heidelberg, 69120 Heidelberg, Germany\\
%
\email{ruobing.shen@informatik.uni-heidelberg.de}
\and
Bo Tang \at
College of Science, Northeastern University, Boston 02115, USA
\and
Leo Liberti, Claudia D'Ambrosio \at
LIX CNRS, École Polytechnique, Institut Polytechnique de Paris, 91128 Palaiseau, France
\and
Stéphane Canu\at
 INSA de Rouen, Normandie Université, 76801 Saint Etienne du Rouvray, France
}
%
\date{Received: date / Accepted: date}

\maketitle              
\begin{abstract}
Piecewise affine functions are widely used to approximate nonlinear and discontinuous functions. However, most, if not all existing models only deal with fitting continuous functions. 
In this paper, We investigate the problem of fitting a discontinuous piecewise affine function to given data that lie in an orthogonal grid, where no restriction on the partition is enforced (i.e., its geometric shape can be nonconvex). This is useful for segmentation and denoising when data correspond to images.
We propose a novel Mixed Integer Program (MIP) formulation for the piecewise affine fitting problem, where binary variables determines the location of break-points. To obtain consistent partitions (i.e.~image segmentation), we include multi-cut constraints in the formulation. Since the resulting problem is $\mathcal{NP}$-hard, two techniques are introduced to improve the computation. One is to add facet-defining inequalities to the formulation and the other to provide initial integer solutions using a special heuristic algorithm. 
We conduct extensive experiments by some synthetic images as well as real depth images, and the results demonstrate the feasibility of our model.
\keywords{Piecewise affine fitting  \and Mixed integer programming \and Cutting plane \and Facet-defining inequalities \and Image processing .}
\end{abstract}
%
%
%
\section{Introduction}
Let $D\subseteq \R^d$ denote the signal domain, and $y: D\rightarrow \R$ denote intensity values of the given signals, possibly with some noise. In this paper, we seek to find (or approximate) a discontinuous piecewise affine function that best fits the data $y$ over $D$, where $D$ is restricted to an orthogonal grid. Although it can be generalized to higher dimensions, we are mostly interested in the scenario when $d=2$, where $D$ is a square grid and $y$ corresponds to natural or depth images.

In statistics, \emph{affine (linear) regression} or \emph{affine fitting} is a widely used approach to model the relationship between the data $y$ and the independent variables~$z\in D$, which are, in our case, the coordinates of $y$. In the parametric model, the relationship is modeled using affine functions. The unknown affine parameters $\beta$ (i.e., slopes and intercepts) are estimated from the given data according to some standard objective functions, such as the well-known mean square error (MSE)~\cite{linearmodel}.

Non-parametric models, on the other hand, assume  that  the  data  distribution  cannot  be  defined  in terms  of  such  a  finite  set  of  parameters $\beta$. Typically, the model grows in size according to the complexity of the data. For instance, one can introduce some fitting variables $w$ to model the data $y$; some assumptions are then made about the connections among these variables.

We call a function $f$ {\it (possibly discontinuous) piecewise affine} over $D$ if there is a partition of $D$ into disjoint subsets $D_1, \ldots, D_k$ such that $f$ is affine when restricted to each $D_i$ (we denote by $f^i$ the function $f$ restricted to $D_i$). Let ${\cal D}$ be the set of all partitions of $D$, and ${\cal F}$ be the set of all piecewise affine functions over $D$, then any choice of $f \in {\cal F}$ defines ${\cal D}'\in {\cal D}$. Moreover, if the partition ${\cal D}'$ is known,  the corresponding $f \in {\cal F} $ can be easily identified by computing the  affine parameters $\beta$ within each region $D_i$ under some objectives (e.g.~MSE).

The problem of piecewise affine fitting has been studied for decades. Numerous clustering based algorithms~\cite{8657940,FerrariTrecate2002ANL,10.1007} are designed for different variants of the problem, but only suffice to find local optimal solutions.
Exact formulations of the problem via MIP are also proposed, but often with restrictions. Examples include the continuous piecewise linear fitting models~\cite{piecewise}, where the domain partition is in a sense pre-defined,
and the fitting function $f$ is restricted to be continuous over $D$.
A general $n$-dimensional piecewise linear fitting problem has been studied in~\cite{Discrete}, and formulated as a parametric model using MIP. But the assumption that the segments are linearly separable does not hold in many practical applications.


In this paper, we will focus on the non-parametric model that finds (or approximates) a possibly discontinuous piecewise affine function $f \in {\cal F}$ to fit the data $y$, where the affine regions are unknown and the affine parameters within each $D^i$ are not explicitly computed. Our problem can be mathematically represented as follows:
\begin{alignat}{2}
\min_{{\cal D'}\in {\cal D}, w} \;\; &\sum_{D_i\subset {\cal D'}} \left(\int_{D_i} |(f^i(z)-y(z)| dz + \lambda Per(D_i)\right)\label{potts}\\
&f(z) = w(z) \tag{\ref{potts}a} \label{potts1},
\end{alignat}
where $Per(D_i)$ denotes the perimeter of $D_i$ and $\lambda$ is a regularization parameter.
The first term measures the quality of data fitting, and the second regularization term is used to balance the former with the number and the boundary length of segments (affine regions), to prevent over-fitting. Note that an absolute fitting term is adopted here to enable a Mixed Integer Linear Programming (MILP) formulation of the model. Being a non-parametric model, further constraints on the fitting variables $w$ will  be defined to model the linearity within $D_i$, in Section~\ref{sec:1dproblem} and \ref{sec:2dproblem}.

Figure~\ref{3Dview} shows a synthetic image with noise that has linear trends and its 3D view, where the horizontal axes ($z_1$ and $z_2$) represent the coordinates of the image pixels. Upon finding a piecewise affine function $f$, we also have a segmentation (to be introduced in next section) of the image into background and four segments, and a denoised image (with fitted value $w$) as a by-product.

\begin{figure}[t]
\center
\includegraphics[width=0.4\linewidth]{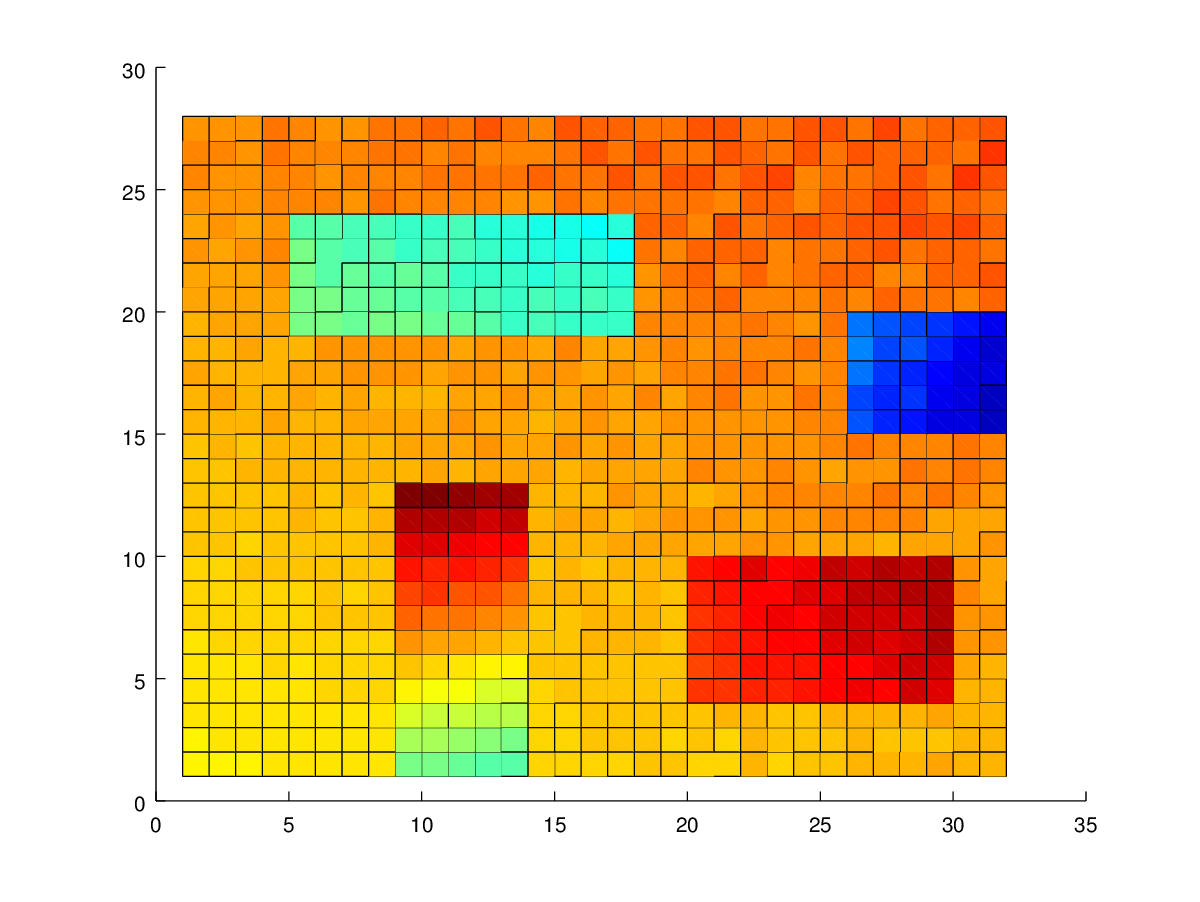}
\includegraphics[width=0.4\linewidth]{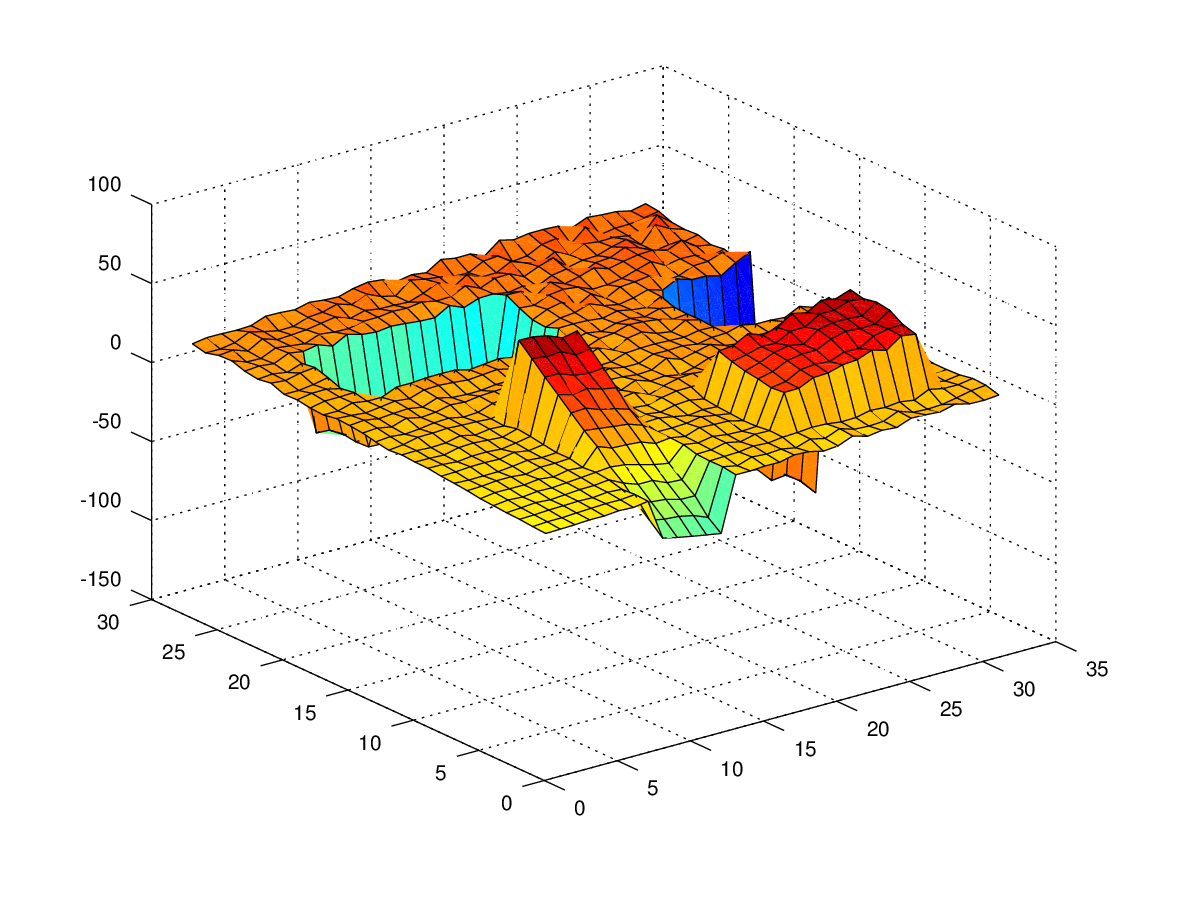}
\caption{A synthetic $2$D image with noise that has linear trend and its $3$D view.}
\label{3Dview}
\end{figure}

\subsection{Related work}

For denoising (fitting) $2$D images, the total variation (TV) model~\cite{rof1992} is widely used
\begin{alignat}{2}
\min_{w} \;\; \int_{D} (w(z)-y(z))^2 dz + \lambda \text{TV}(w),
\label{ROF}
\end{alignat}
where the first part is the squared data fitting term ($w$ the fitting function) and the second part is the regularization term. The TV regularizers can be either isotropic or anisotropic. The latter can be mathematically described as follows:
\[
\text{TV}_{ani}(w) = \int_{D} \left(\lvert \partial_{z_1} w \rvert + \lvert  \partial_{z_2} w \rvert \right)dz,
\]
where $\partial_{z_i}$ represents the partial derivative of $w$ with respect to $z_i$, for $i=1,2$.
Lysaker and Tai~\cite{Lysaker2006} provide a second-order regularizer
\[
R_2(w) = \int_{D} \left(\lvert \partial_{z_1z_1}w \rvert + \lvert  \partial_{z_2z_2}w \rvert \right)dz,
\]
which better fits the scenarios of this paper.


Let $[n]$ denote the discrete set $\{1,2,\ldots, n\}$. Given $n$ signals with coordinates $[n]$ and the $n$ dimensional data vector $\y$, the classical (discrete) piecewise constant Potts model~\cite{potts} has the form
\begin{equation}
\label{potts0}
\min_{\w} \;\|\w-\y\|_2 + \lambda\|\nabla^1 \w\|_0,
\end{equation}
where $\|\cdot\|_2$ denotes the $\ell_2$ norm, and $\|\cdot\|_0$ the $\ell_0$ norm. The \emph{discrete first derivative} $\nabla^1 \w$ of the fitting vector~$\w\in\R^n$ is the $n-1$ dimensional vector $(w_2-w_1,w_3-w_2,\ldots,w_n-w_{n-1})$ and the $\ell_0$ norm 
of a vector is its number of nonzero entries. The case of 2D images can be easily generalized.

Compared to  the TV regularization term which over-penalizes the
sharp discontinuities between two regions in an image, the $\ell_0$ term in the Potts model is more desirable, but also computationally costlier. 
The discrete Potts model is in general $\mathcal{NP}$-hard to solve. The work of~\cite{german} was one of the first to utilize the Potts, and recently~\cite{2018arXiv180307351S,10.1007} formulate it as a MIP that could find global optimum. 


Apart from denoising, we also look into the segmentation problem. In graph based models, one first builds a square grid graph $G(V,E)$ to represent an image, where $V$ corresponds to pixels of an image grid and $E$ represents the $4$ or $8$ neighboring relations between pixels.

A graph partitioning $\cal{V}$ is a partition of  $V$ into disjoint node sets $\{V_1, V_2, \ldots, V_k\}$. And in graph-theoretical terms, the problem  of image segmentation corresponds to graph partitioning. The \emph{multicut} induced by $\cal V$ is the edge set 
$\delta(V_1, V_2, \ldots, V_k) = \{uv \in E \mid \exists i \ne j \text{ with } u\in V_i \text{ and } 
v \in V_j\}$. 
Hence, an image segmentation problem  can be represented either by \emph{node labeling}, i.e., assigning a label to each node $v\in V$, or by
 \emph{edge labeling}, i.e., a multicut defined by a subset of edges $E^\prime \subseteq E$, see the left image of Figure~\ref{multicut2} as an example, where the multicut of $8$ dashed edges uniquely defines a partition of the $4\times 4$-grid graph  into~$3$ segments.

In machine learning, one often distinguishes between \emph{supervised} and \emph{unsupervised} segmentation. In the former case, the labels of classes (e.g., person, grass, sky, etc) are pre-defined, and annotated data is needed to  train the model.
Among many existing supervised models, the classical Markov Random Field (MRF) is well studied, and interested readers may refer to~\cite{mrf-s} for an overview of this field. Recently, Deep Convolutional Neural Networks~\cite{0483bd94} (DCNN) have become increasingly important in many computer vision tasks, such as semantics and instance segmentation~\cite{7478072,Hayder2016BoundaryAwareIS}. However, huge amount of annotation effort (in terms of pixel level annotated data) and computational budget (in terms of number of GPUs and training time) are needed.

In the unsupervised case, the labels' class information is missing. This introduces ambiguities when node labeling is used. See for example the node labeling in Figure~\ref{multicut2}. If we permute the labels (colors), it will result in the same segmentation. On the contrary, edge labeling (e.g., by multicuts) does not exhibit such symmetries and is therefore more appealing in this case. Recent notable approaches
are the (lifted) multicut problems~\cite{kappesglobally,pmlr-hornakova17a,towards} based on  Integer Linear Programming (ILP) formulations, which label edges ($0$ or $1$) instead of pixels. The \emph{multicut constraints}~\cite{towards} (introduced in Section~\ref{section_multicut}) are used to enforce a valid segmentation. These methods do not require annotated data and can be run directly on CPUs.
In this paper, we will focus on this approach.

In this work, we borrow ideas from the second derivative TV and Potts model, and propose a novel MILP formulation for the discontinuous piecewise affine fitting problem. 
The original contributions of this paper are as follows.
\begin{itemize}
\item We propose an approximate and non-parametric model for the general discontinuous piecewise affine fitting problem.
\item The model is formulated as a MILP and multicut constraints are added using cutting plane method to  ensure a valid segmentation.
\item The piecewise affine function can be easily constructed given the segmentation.
\end{itemize}

\section{MIP for the piecewise linear fitting model: 1D}
\label{sec:1dproblem}
We first restrict ourselves to the simple $1$D signals case where the signal domain $D\subseteq \Z^1$ (could be easily generalized to $D\subseteq \R^1$). Our model is able to find the optimal piecewise linear function $f\in\mathcal{F}$ that best fits the original data~$y$. 

\subsection{Modeling as a MIP}
The $1D$ signals with $n$ discrete points could be naturally modeled as a chain graph. The associated graph $G(V,E)$ is defined with $V=\{i\;| \;i\in[n]\}$ and $E=\{e_{i}= (i,i+1)\;|\; i\in [n-1]\}$.
We introduce $n-1$ binary variables:
$$ x_{e}=\left\{
\begin{aligned}
1, & \;\;\text{if two end nodes of edge e are in different affine segments, }\\
0, & \;\;\text{otherwise,}
\end{aligned}
\right.
$$
where an edge $e$ is called \emph{active} if $x_e=1$, otherwise it is \emph{dormant}. 

Our goal is to fit a piecewise linear function $f\in\mathcal{F}$ to the input data $y$. We denote the fitting value $w_i := f(z_i)$, for $i\in[n]$. The coordinate $z_i=i$ and denote $x_{e_i}$ as $x_i$. We further define the following property:
\begin{align}
	\nabla^2w_i= 0  &\Leftrightarrow x_{i-1} =  x_{i} = 0, \; \; i\in[2:n-1],\label{logic}
\end{align} 
where $\nabla^2w_i:= w_{i-1}-2w_i+w_{i+1}$ is the the discrete second derivative, and $[2:n-1]$ denotes the discrete set $\{2,3,\ldots,n-1\}$.

The above property can be modeled via MIP using the ``big $M$'' technique, which leads to the  formulation
\begin{alignat}{2}
	\min \;\; \sum\optlimits_{i=1}^n&| w_i -y_i |+\lambda \sum\optlimits_{i=1}^{n-1} x_{i} \label{eq:Pott_1D2}\\ 
	|\nabla^2w_{i}| &\leq M(x_{i-1}+x_i), \;\;  i\in[2:n-1],\tag{\ref{eq:Pott_1D2}a}\label{bigmcons}\\
	w_i &\in \R,\;\; i\in[n],\nonumber\tag{\ref{eq:Pott_1D2}b}\\
	x_{i} &\in \{0,1\},\;\;i\in[n-1]\nonumber\tag{\ref{eq:Pott_1D2}c},
\end{alignat}
where $\lambda>0$ is similar to the regularization term in the Potts model~\eqref{potts0}. 
It is worth to mention that there are common tricks to formulate~\eqref{eq:Pott_1D2}-(\ref{eq:Pott_1D2}c) as a MILP. Namely, $|w|\leq Mx$ is replaced by two constraints $w\leq Mx$ and $-w\leq Mx$, and the absolute term $|w-y|$ in the objective function is replaced by $\epsilon^+ +\epsilon^-$, plus an additional constraint $w-y = \epsilon^+ -\epsilon^-$, where $\epsilon^+ \geq 0$, $\epsilon^- \geq 0$.

\begin{lemma}
\label{lemma1}
The optimal solution $x^\star$ of problem~\eqref{eq:Pott_1D2}-(\ref{eq:Pott_1D2}c) satisfies property~\eqref{logic}.
\end{lemma}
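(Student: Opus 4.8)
The statement is a biconditional, so I would split it into its two implications and expect only one to use optimality. The direction $x_{i-1}=x_i=0 \Rightarrow \nabla^2 w_i=0$ is immediate and in fact holds at \emph{every} feasible point: if both incident variables vanish, the right-hand side of the big-$M$ constraint~\eqref{bigmcons} for index $i$ is $M(x_{i-1}+x_i)=0$, so $|\nabla^2 w_i|\le 0$ and hence $\nabla^2 w_i=0$. No optimality is needed here; this is just the modelling intent already encoded in~\eqref{bigmcons}.

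The reverse implication $\nabla^2 w_i=0 \Rightarrow x_{i-1}=x_i=0$ is the substantive one, and I would argue it by contradiction from optimality of $(w^\star,x^\star)$. Suppose $\nabla^2 w^\star_i=0$ for some $i\in[2:n-1]$ yet one incident variable is active, say $x^\star_i=1$ (the case $x^\star_{i-1}=1$ is symmetric). The plan is to switch that variable off and exhibit a cheaper feasible point: setting $x_i:=0$ reduces the penalty $\lambda\sum_i x_i$ by exactly $\lambda>0$ and leaves the fitting term $\sum_i|w^\star_i-y_i|$ untouched, so the objective drops by $\lambda$. It then remains only to verify that feasibility of~\eqref{bigmcons} is preserved. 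Since $x_i$ occurs in precisely two constraints, those for indices $i$ and $i+1$, and the constraint for index $i$ survives trivially (its left-hand side is $|\nabla^2 w^\star_i|=0\le M x^\star_{i-1}$), everything reduces to the constraint for index $i+1$.

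That constraint, which after the switch reads $|\nabla^2 w^\star_{i+1}|\le M x^\star_{i+1}$, is exactly where the difficulty concentrates and is the step I expect to be the main obstacle: it is violated precisely when $\nabla^2 w^\star_{i+1}\ne 0$ while $x^\star_{i+1}=0$, i.e.\ when the shared variable $x_i$ is legitimately ``paying'' for a kink on its far side. Optimality alone forbids a \emph{redundant} active edge but does not force each active edge to sit between two genuine kinks, so the naive flip can fail, and one must reassign coverage — e.g.\ activate $x_{i+1}$ in exchange for $x_i$ at equal support — rather than simply delete an edge. To make this rigorous I would select, among all optimisers, one that is lexicographically extreme (say minimising a secondary criterion such as $\sum_i i\,x_i$) and show that any residual violation of~\eqref{logic} permits either a strict improvement or a zero-cost exchange contradicting that choice. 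I would flag that this is delicate, and that the argument implicitly needs $M$ large enough that a single active edge relaxes~\eqref{bigmcons} (so $M\ge\max_i|\nabla^2 w^\star_i|$), together with a nondegeneracy assumption excluding an \emph{isolated} nonzero second derivative (a single slope change flanked by $\nabla^2 w^\star_{i\pm1}=0$), since in that degenerate configuration the coverage constraint and~\eqref{logic} are mutually incompatible and the clean exchange cannot close.
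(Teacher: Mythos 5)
Your forward direction coincides exactly with the paper's: it is pure feasibility of the big-$M$ constraint~\eqref{bigmcons} and, as you note, holds at every feasible point, not just at optima.

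For the reverse direction, the paper's proof is precisely the ``naive flip'' you refute: a single sentence asserting that $\nabla^2 w_i=0$ forces $x^\star_{i-1}=x^\star_i=0$ ``since the problem is a minimization with positive weights on $x$'', with no account of the fact that $x_i$ also covers the constraint~\eqref{bigmcons} at index $i+1$. Your obstacle is therefore not a technicality you failed to overcome; it is a genuine counterexample, and the lemma as stated is false. Concretely, take $n=6$, $y=(0,0,0,1,2,3)$, $M\geq 1$ and $\lambda$ small enough that one edge penalty is cheaper than the best purely affine $\ell_1$ fit (any $0<\lambda<2$ works here). Then $\nabla^2 y_i=0$ for all $i\neq 3$ and $\nabla^2 y_3=1$, so every optimal solution has $w^\star=y$ and exactly one of $x^\star_2,x^\star_3$ equal to $1$: a single active edge must cover the constraint at $i=3$, and any positive fitting error or second active edge costs strictly more. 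If $x^\star_2=1$, property~\eqref{logic} fails at $i=2$ (there $\nabla^2 w^\star_2=0$ yet $x^\star_2=1$); if $x^\star_3=1$, it fails at $i=4$. This is exactly your ``isolated kink'' configuration, in which covering the kink and satisfying~\eqref{logic} at the two flanking indices are mutually exclusive; ironically, it is also the paper's own example of non-unique optima (right image of Figure~\ref{outlier}), so the paper contradicts its own lemma one paragraph after proving it. What survives, and what the subsequent development actually relies on, is the one-sided statement: feasibility gives $x^\star_{i-1}=x^\star_i=0 \Rightarrow \nabla^2 w^\star_i=0$, equivalently $\nabla^2 w^\star_i\neq 0 \Rightarrow x^\star_{i-1}+x^\star_i\geq 1$. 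The biconditional would require exactly the nondegeneracy hypothesis you flag (no isolated kinks in the optimal $w^\star$), under which your exchange argument can be completed. In short, your diagnosis is correct and strictly more careful than the paper's proof.
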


\begin{proof}
The direction that $x^\star_{i-1} =  x^\star_{i} = 0 \Rightarrow \nabla^2w_i= 0$  directly follows constraint~\eqref{bigmcons}.
On the other hand, if $\nabla^2w_i = 0$, the optimal solutions satisfy $x^\star_{i-1}+x^\star_{i} = 0$ (thus $x^\star_{i-1} =  x^\star_{i} = 0$) since~\eqref{eq:Pott_1D2}-(\ref{eq:Pott_1D2}c) is a minimization problem with positive weights on~$x$.  
\end{proof}

Figure~\ref{fig:potts1D} shows an example of $3$ affine segments and $2$ active edges computed by formulation \eqref{eq:Pott_1D2}-(\ref{eq:Pott_1D2}c). We see that the optimal solution $w_i$ is the fitting value for node $i$, and $x_i=1$ acts as the boundary between two affine segments. As a result, the nodes between two active edges define one segment, and the signals within one segment share the same linear slope. Although being non-parametric, the linear parameters for each segment can be easily computed afterwards, and the number of segments equals $\sum\optlimits_{i=1}^{n-1} x_{i}+1$. Hence, upon solving the MIP formulation~\eqref{eq:Pott_1D2}-(\ref{eq:Pott_1D2}c) in $1D$, a piecewise linear function $f\in\mathcal{F}$ can be easily constructed, and $w_i := f(z_i)$, $\forall i\in[n]$.

\begin{figure}[!t]
	\centering
	\includegraphics[width=0.7\linewidth]{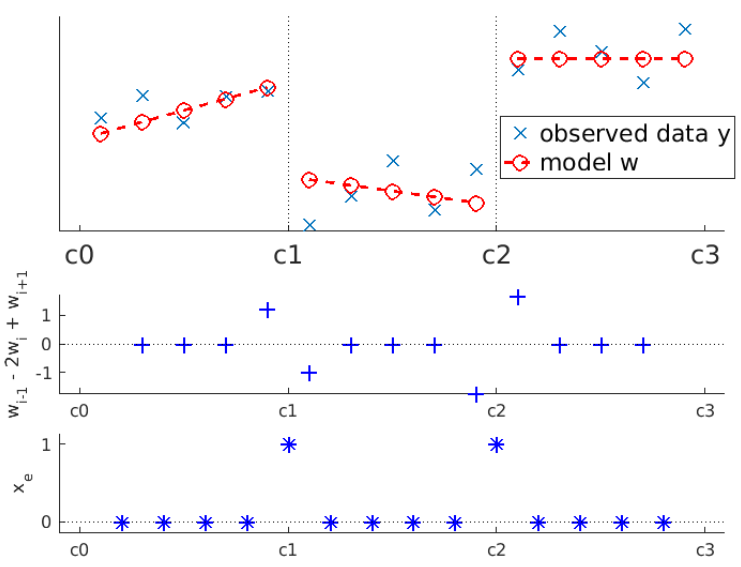}
	\caption[An example with $3$ affine segments and $2$ active edges]{An example with $3$ affine segments and $2$ active edges. }
	\label{fig:potts1D}
\end{figure}

Note in the above example, the cases where $\nabla^2 w_i\neq0$ actually induces $x_{i-1}+x_i=1$, for some $i\in [2,n-1]$. However, there exists instances where $x_{i-1}+x_i=2$ for $\nabla^2 w_i\neq0$. The image on the left of Figure~\ref{outlier} depicts an example where the node~$5$ is an outlier (as an one node segment), and $x_{e_l}+x_{e_r}=2$.
We also observe that problem~\eqref{eq:Pott_1D2}-(\ref{eq:Pott_1D2}c) does not necessarily output unique optimal integer solution $x$. One extreme example is shown in the right image of Figure~\ref{outlier}, where either $x_{e_l}$ or $x_{e_r}$ can be active (but not both), and they yield the same optimal objective value.


\begin{figure}[h]
	\centering
	\includegraphics[width=0.48\linewidth]{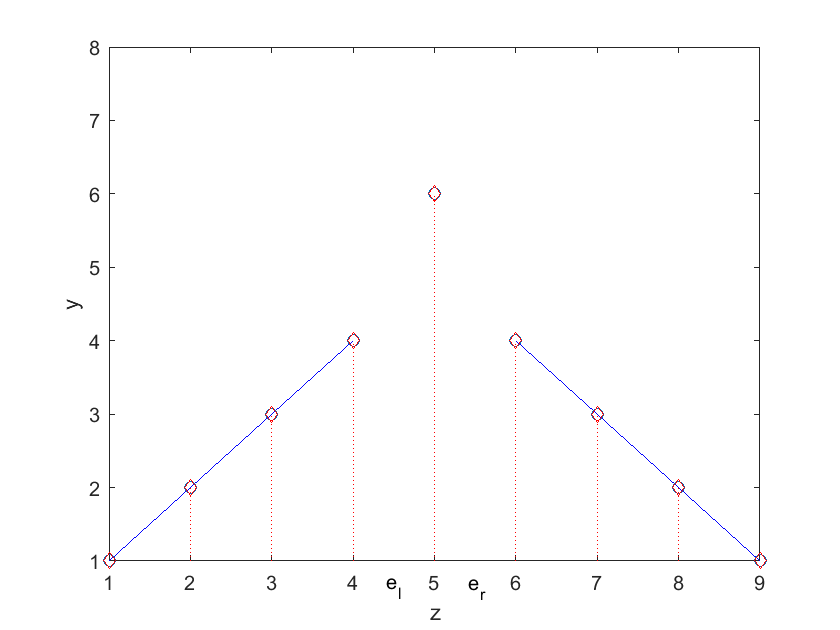}
	\includegraphics[width=0.48\linewidth]{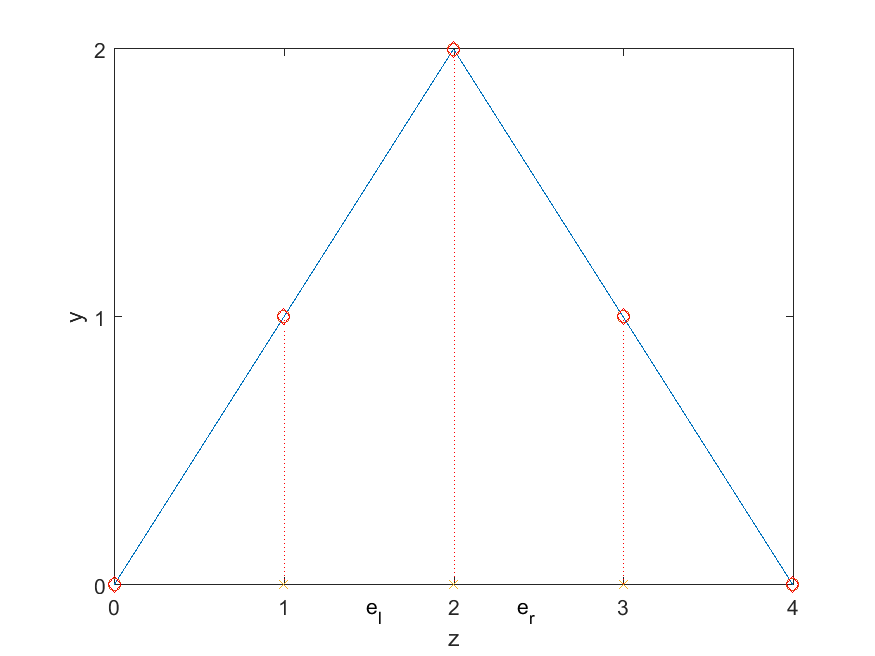}
	\caption{Left: example where outlier exists (both $e_l$ and $e_r$ are active). Right: example with two segments where the optimal solution is not unique (either $e_l$ or $e_r$ is active).}
	\label{outlier}
\end{figure}




\section{MIP of the piecewise affine fitting model: 2D}
\label{sec:2dproblem}

We are more interested in the $2$D image case where the domain $D\subseteq \Z^2$. Our model is able to find the fitting value $w$ and a valid segmentation. The optimal piecewise affine function can be approximated and constructed based on the segmentation. 

\subsection{Modeling as a MIP}
A $2D$ image with $m\times n$ pixels could be naturally modeled as a square grid graph $G(V,E)$, where $V=\{(i,j)| \;i\in[m], j\in [n]\}$, and $E$ represent the relations between the center and its $4$ neighboring pixels (see Figure~\ref{multicut2} for demonstration).
Let $z_{i,j}=(i,j)\in \Z^2\subseteq D$ be the coordinates for pixel $(i,j)$, and the matrix $Y=(y_{i,j})\in \R^{m\times n}$ be the intensity values of the image. 
We divide the edge set $E$ of the grid graph into its horizontal (row) edge set $E^r$ and its vertical (column) edge set $E^c$. So $E = E^r \cup E^c$, and   $E^r \cap E^c = \emptyset$.
Denote $e_{i,j}^r\in E^r$ to present edge $((i,j),(i,j+1))$ and $e_{i,j}^c\in E^c$ to represent $((i,j),(i+1,j))$. Again for simplicity, we denote the binary edge variables $x^r_{i,j}:= x_{e^r_{ij}}$ and $x^c_{i,j}:= x_{e^c_{ij}}$.

The piecewise affine fitting model in $2$D is obtained by formulating \eqref{eq:Pott_1D2}-(\ref{eq:Pott_1D2}c) per row and column
\begin{align}
	\min\;  \sum\optlimits_{i=1}^m &\sum\optlimits_{j=1}^n | w_{i,j} - y_{i,j} |+\lambda \sum\optlimits_{e\in E} x_{e} \label{TV2}\\
	|\nabla_r^2w_{i,j}| &\leq M(x^r_{i,j-1} + x^r_{ij}), 	    \;\; i \in [m],\; j\in[2:n-1],\tag{\ref{TV2}a} \label{TV21}\\
	|\nabla_c^2w_{i,j}| &\leq M(x^c_{i-1,j} + x^c_{ij}), \;\; j\in[n],\; i\in [2:m-1],\tag{\ref{TV2}b} \label{TV22}\\
w_{ij} &\in \R, \;\;\;\;\; i\in[m], \;  j\in[n],\tag{\ref{TV2}c}\\
x_e  &\in \{0,1\}, \;\;e\in E \tag{\ref{TV2}d},
\end{align}
where $M$ is again the big-M constant. Here, $\nabla_r^2w_{i,j}=w_{i,j-1}-2w_{i,j}+w_{i,j+1}$, and $\nabla_c^2w_{i,j}=w_{i-1,j}-2w_{i,j}+w_{i+1,j}$. That is, the discrete second derivative with respect to $z_1$ and $z_2$-axis.
Upon solving~\eqref{TV2}-(\ref{TV2}d), it serves for the purpose of denoising by computing $w$. But two questions still remain: does the binary solution $x$ represent a valid segmentation? If so, is the corresponding piecewise affine function $f\in\mathcal{F}$ (obtained by affine fitting each segment) aligned with $w$, i.e., is $w_{ij} = f(z_{i,j})$?

The answers to both questions are ``no'', unfortunately. We will show in the next two sections that, the first one could be fixed by enforcing the multicut constraints. But the second one is not guaranteed, thus making our model approximate.


\subsection{Multicut constraints for valid segmentation}
\label{section_multicut}
The multicut constraints introduced in~\cite{andres2011} are inequalities that enforce valid segmentation in terms of edge variables. It reads

\begin{equation}
\sum_{e\in C\setminus\{e'\}}x_e\geq x_{e'}, \;\;  \forall \;\text{cycles $C\subseteq E$, $e'\in C$}, \label{multicut_a}
\end{equation}
which basically says that for any cycle, the number of active edges cannot be $1$. Recall that an edge is called active if its two end nodes belong to different segments. Because otherwise, the two nodes of the active edge are again ``linked'' (hence belong to the same segment) by connecting the rest edges of the cycle, hence a contradiction.

We now prove the following lemma.

\begin{lemma}
\label{lemma4}
The multicut constraints~\eqref{multicut_a} are needed for the optimal solution~$x$ of \eqref{TV2}-(\ref{TV2}d) to form a valid segmentation. 
\end{lemma}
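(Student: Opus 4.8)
The plan is to establish that the multicut constraints~\eqref{multicut_a} are \emph{necessary}, meaning that without them the binary solution~$x$ of~\eqref{TV2}-(\ref{TV2}d) may fail to define a valid segmentation. Since a valid segmentation corresponds to a partition of~$V$ into connected components determined by the dormant ($x_e=0$) edges, the natural interpretation of ``$x$ forms a valid segmentation'' is that the active edge set~$\{e : x_e = 1\}$ is a genuine multicut, i.e.~it equals $\delta(V_1,\ldots,V_k)$ for some partition. The cleanest way to prove necessity is by exhibiting a counterexample: a concrete data matrix~$Y$ and regularization parameter~$\lambda$ for which an optimal solution of~\eqref{TV2}-(\ref{TV2}d) has an active edge set that is \emph{not} a multicut, and then observing that this configuration is precisely the one forbidden by~\eqref{multicut_a}.

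First I would recall the characterization from Section~\ref{section_multicut}: an edge set is a multicut of some partition if and only if, along every cycle~$C$ in~$G$, the number of active edges is never exactly one. Hence to show the constraints are needed it suffices to construct an optimal~$x$ in which some cycle~$C$ has exactly one active edge. Second, I would build the smallest such instance on the grid graph---a single unit square (a $2\times 2$ block of pixels), whose four edges form the unique cycle~$C$. I would choose intensity values~$y_{i,j}$ so that the fitting term and the big-$M$ constraints~\eqref{TV21}-\eqref{TV22} are simultaneously minimized by a solution in which exactly one of the four edges of~$C$ is active. Because the big-$M$ constraints only link $x$ to the second derivatives $\nabla_r^2 w$ and $\nabla_c^2 w$, and because $\lambda$ penalizes $\sum_e x_e$, a carefully chosen $Y$ (for example, data forcing a nonzero second derivative across exactly one edge while all other second derivatives vanish) lets~\eqref{TV2} be satisfied at optimum with a single active edge on the cycle.

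Third, I would verify that this single-active-edge solution is indeed optimal for~\eqref{TV2}-(\ref{TV2}d): the big-$M$ constraints are all satisfied (since activating one of the two edges in a sum $x_{i,j-1}+x_{ij}$ already relaxes the bound), and any feasible alternative either fits the data worse or activates at least as many edges, so no cheaper solution exists. Fourth, I would point out that such an~$x$ violates~\eqref{multicut_a} for the cycle~$C$ with $e'$ the active edge, since $\sum_{e\in C\setminus\{e'\}} x_e = 0 < 1 = x_{e'}$, and geometrically this active edge does not separate~$V$ into two pieces---its endpoints remain connected through the other three dormant edges---so the output is not a valid segmentation. This establishes that~\eqref{multicut_a} must be imposed.

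The main obstacle I anticipate is the optimality argument in the third step rather than the construction itself: I must ensure the counterexample instance does not admit some competing optimal solution that \emph{already} happens to be a valid multicut (which would undercut the claim of necessity), and I must confirm that the big-$M$ and absolute-value-fitting structure genuinely permits the single-active-edge solution to be attained at the minimum. Handling this requires a careful case analysis over which edges are active, comparing objective values, and checking that the chosen~$\lambda$ makes the one-active-edge configuration strictly or weakly optimal while the forbidden structure survives. Once the counterexample is pinned down, the violation of~\eqref{multicut_a} and the failure of the segmentation property follow directly from the cycle characterization, so I expect the remainder to be routine.
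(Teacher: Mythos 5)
Your high-level plan --- prove the lemma by exhibiting an instance where some optimal solution of \eqref{TV2}--(\ref{TV2}d) has exactly one active edge on a cycle, hence violates \eqref{multicut_a} and is not a multicut --- is exactly the paper's strategy, and your final step (the violation for $e'$ the active edge, whose endpoints stay connected through the dormant edges) is correct. The gap is in the proposed construction. A single $2\times 2$ block of pixels cannot exhibit the phenomenon: the coupling constraints (\ref{TV2}a) and (\ref{TV2}b) are indexed over $j\in[2:n-1]$ and $i\in[2:m-1]$, i.e., they require three consecutive pixels in a row or column, and for $m=n=2$ both index sets are empty. The formulation then contains no constraint linking $w$ to $x$ at all, so the unique optimum is $w=y$, $x=0$ (every active edge costs $\lambda>0$ and buys nothing), which is a trivially valid segmentation no matter how $Y$ and $\lambda$ are chosen. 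Relatedly, ``data forcing a nonzero second derivative across exactly one edge'' is not something the model can express: second derivatives live at nodes (triples of collinear pixels), and each constraint bounds $|\nabla^2 w|$ by $M$ times the \emph{sum of two} edge variables, so the data can only ever force ``at least one of two adjacent edges'', never one specific edge.

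The missing idea is degeneracy: the single-active-edge solution cannot be forced, it must arise as one of several \emph{tied} optima. This is what the paper does: it takes a $3\times 5$ image whose rows are identical and V-shaped (data lying on the two affine pieces $y=4-z_2$ and $y=z_2$, with the kink pixel on both), so each row has two equally cheap choices for its break edge --- precisely the 1D non-uniqueness of Figure~\ref{outlier}; choosing different columns in adjacent rows is still optimal (all columns of $w=y$ are constant, so no vertical edge is forced), yet creates a $4$-cycle with exactly one active edge. Your ``smallest instance'' instinct can be repaired in this spirit, but the minimum is $2\times 3$, not $2\times 2$: take both rows equal to $(1,0,1)$ and $\lambda<1$; with $m=2$ there are no column constraints, each row optimally activates either one of its two edges, and activating the left edge in row $1$ and the right edge in row $2$ yields an optimal solution violating \eqref{multicut_a} on the unique left unit square. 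Finally, your worry in the last step --- that a competing optimal solution which \emph{is} a valid multicut would undercut necessity --- is misplaced: such optima exist in the paper's instance too (all rows breaking at the same column). The lemma only needs \emph{some} optimal solution to be invalid, since the constraints are then needed to cut those optima off; what you must verify instead is just that the tied, inconsistent choice attains the same optimal value, which is where your case analysis should be spent.
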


\begin{proof}
We prove this lemma by constructing a counter-example as follows:

In the left image of Figure~\ref{badcut}, the data terms $y$ of all $15$ pixels are constructed to lie exactly in two affine planes with respect to their coordinates $z=(z_1,z_2)$. The optimal affine function of the left plane is $y=4-z_2$ and the right one is $y=z_2$. We shall see that the $3$ pixels with data $y=2$ lie on both affine planes with respect to the coordinates $z$.

\begin{figure}[t]
\centering
\includegraphics[width=0.42\columnwidth]{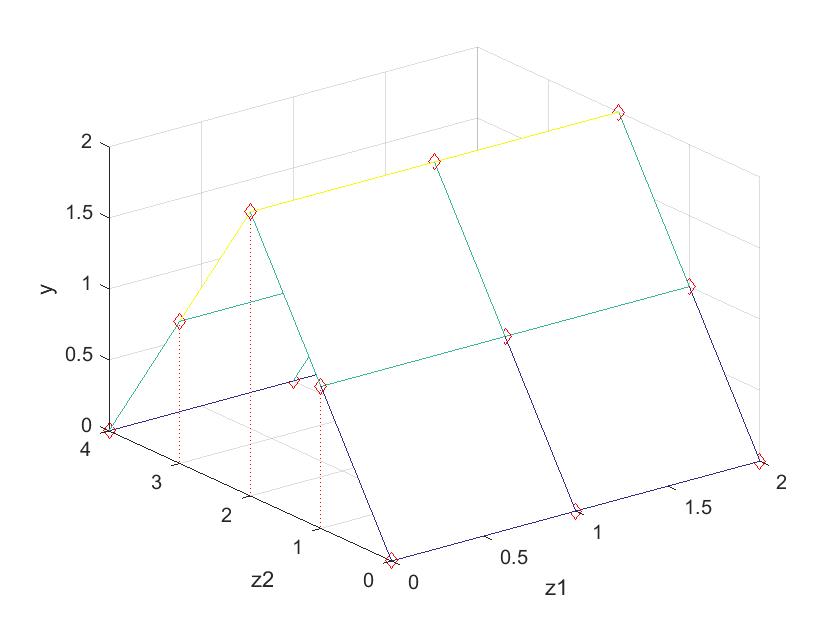}
\includegraphics[width=0.33\columnwidth]{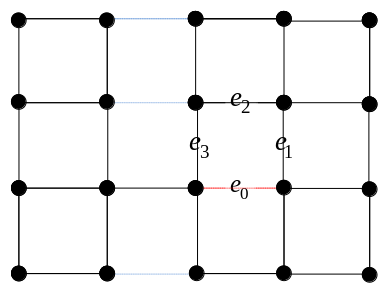}
\caption[A counter-example where model~\eqref{TV2}-(\ref{TV2}d) does not form a valid segmentation.]{A counter-example where model~\eqref{TV2}-(\ref{TV2}d) does not form a valid segmentation. Left: $3$D view of input image. Right: the corresponding graph and active edges.}
\label{badcut}
\end{figure}

If we project the $3$D plot into the $z_2,y$-space, for every row of the image grid, it is exactly the same $1$D case we studied in the right image of Figure~\ref{outlier}. We have showed there that the optimal solution is not unique. 

Hence, we can easily construct one optimal solution $x^\star$ ($3$ blue edges plus $1$ red edge) of~\eqref{TV2}-(\ref{TV2}d) shown in the right image of Figure~\ref{badcut}, where the multicut constraints~\eqref{multicut_a} is not satisfied. That is,  there exists a cycle $e_0$-$e_1$-$e_2$-$e_3$ that violates it.
\end{proof}



\subsection{The main formulation in $2$D}
We thus need to add the multicut constraints~\eqref{multicut_a} to the piecewise affine fitting model~\eqref{TV2}-(\ref{TV2}d), to  form a valid segmentation. This leads to the main formulation of our paper

\begin{align}
	\min\;  (1-\lambda)\sum\optlimits_{i=1}^m \sum\optlimits_{j=1}^n &| w_{i,j} - y_{i,j} |+\lambda \sum\optlimits_{e\in E} x_{e}\label{model2}\\
	|\nabla_r^2w_{i,j}| &\leq M(x^r_{i,j-1} + x^r_{ij}), 	    \;\; i \in [m],\; j\in[2:n-1],\tag{\ref{model2}a} \label{TV21}\\
	|\nabla_c^2w_{i,j}| &\leq M(x^c_{i-1,j} + x^c_{ij}), \;\; j\in[n],\; i\in [2:m-1],\tag{\ref{model2}b} \label{TV22}\\
	\sum\optlimits_{e\in C\setminus\{e'\}}x_e&\geq x_{e'},  \;\;\forall \;\mbox{cycles} \; C\subseteq E, e'\in C,\label{multic}\tag{\ref{model2}c}\\
w_{ij} &\in \R, \;\; i\in[m], \;  j\in[n],\tag{\ref{model2}d}\\
x_e  &\in \{0,1\}, \;\;e\in E \tag{\ref{model2}e}.
\end{align}

Note that the number of inequalities~\eqref{multic} is exponentially large~\cite{andres2011} with respect to $\lvert E \rvert$, where $\lvert E \rvert$ denotes the number of edges in $G$. Hence, in practice, it is not possible to include them into~\eqref{model2}-(\ref{model2}e) at one time. We will discuss in details in Section~\ref{exact_bc} the cutting plane algorithm that handles~\eqref{multic}.

It is well known that if a cycle $C\in G$ is chordless,
then the corresponding multicut constraint~\eqref{multicut_a} is facet-defining for the corresponding multicut 
polytope~\cite{kappesglobally,KAPPES2016}. Among all, the simplest ones of a grid graph are the $4$ and $8$-edge chordless cycle constraints (see the 4-edge cycle $e_0-e_1-e_2-e_3$ in Figure~\ref{multicut2} for an example), and the number of these constraints are linear to $\lvert E \rvert$.
\begin{figure}[t!]
 \centering
 \includegraphics[width=0.35\linewidth]{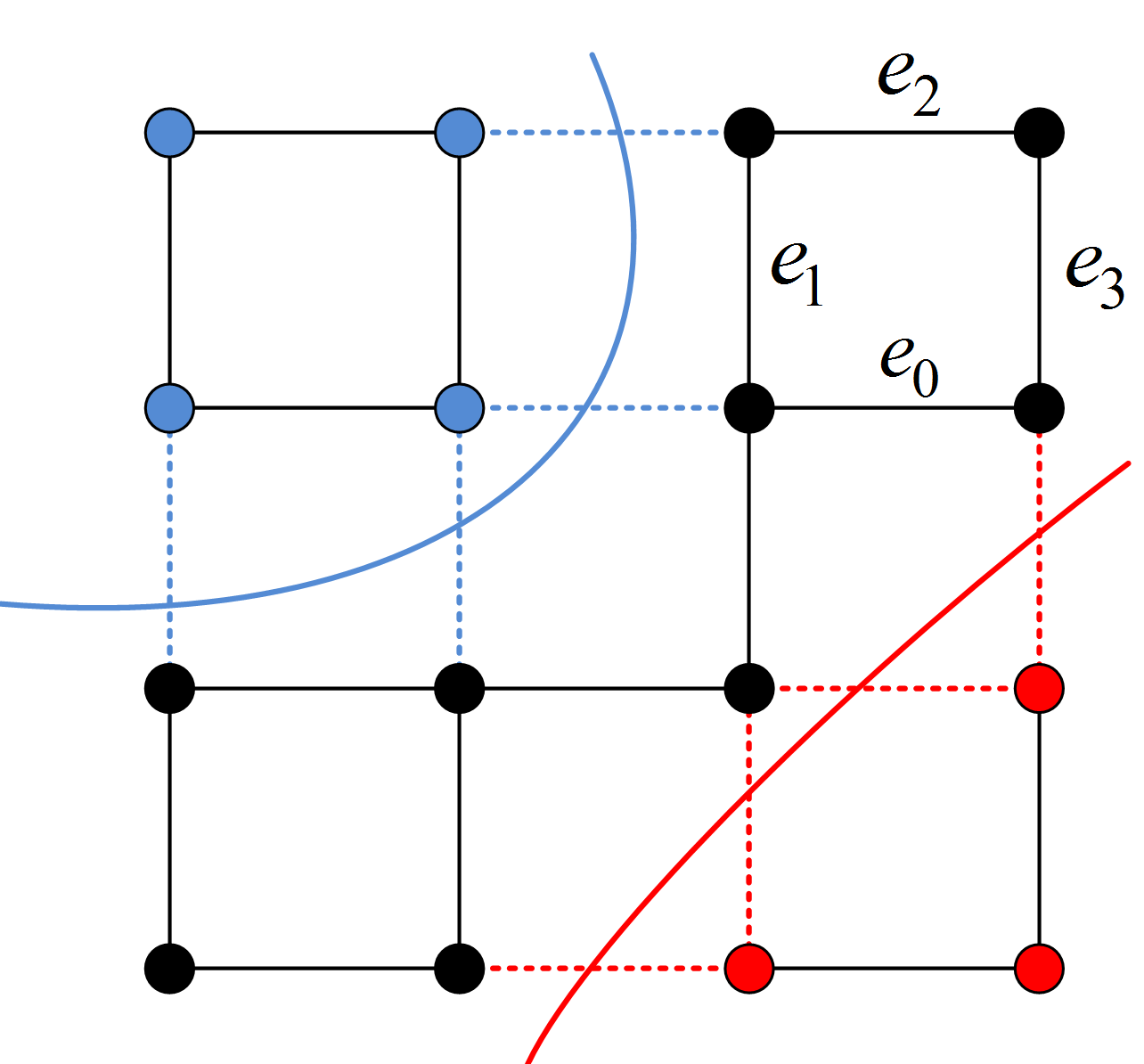}
 \includegraphics[width=0.3\columnwidth]{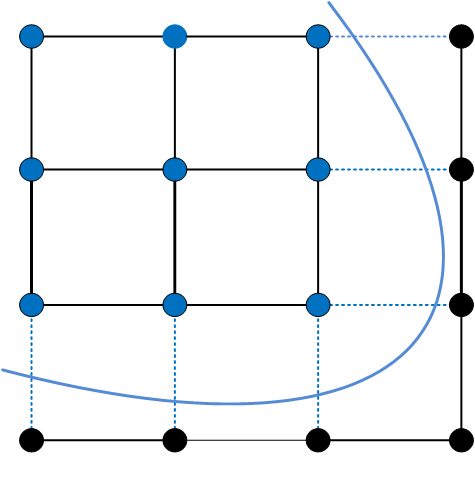}
 \caption[Two representations of image segmentation.]{Left: two representations of an image segmentation: node labeling (by colors) and edge labeling via multicuts (dashed edges). Right: example of a $9$-pixel segment.}
 \label{multicut2}
\end{figure}
In Section~\ref{Computations2}, we will test different strategies of adding the $4$ and $8$-edge chordless cycle constraints 
to~\eqref{model2}-(\ref{model2}e) as initial constraints.

\subsection{Approximate model for piecewise affine fitting}
Finally, we prove the following theorem.

\begin{figure}[t]
\centering

\label{cut_edge}
\end{figure}

\begin{theorem}
\label{lemma5}
The MIP formulation~\eqref{model2}-(\ref{model2}e) is only approximate in finding the optimal piecewise affine fitting function $f \in {\cal F} $ that best fits $y$, i.e., \eqref{potts1} does not hold.
\end{theorem}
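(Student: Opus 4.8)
The plan is to establish the statement by constructing an explicit counter-example, mirroring the proof technique of Lemma~\ref{lemma4}. The key structural observation is that constraints~\eqref{TV21}--\eqref{TV22} act only \emph{separately} along rows and columns: within a segment, where all incident edges are dormant, they enforce $\nabla_r^2 w_{i,j}=0$ and $\nabla_c^2 w_{i,j}=0$, i.e., $w$ is affine along every row and along every column. Being simultaneously row-affine and column-affine, however, is strictly weaker than being affine over the two-dimensional domain: the functions satisfying both discrete conditions are exactly the discrete \emph{bilinear} functions $w_{i,j}=\alpha\,ij+\beta\,i+\gamma\,j+\delta$, whereas the genuinely affine functions $f\in\mathcal{F}$ are precisely those with $\alpha=0$. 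Any optimal $w$ with $\alpha\neq 0$ therefore cannot coincide with the affine fit of its segment, which is the mechanism behind the failure of~\eqref{potts1}.

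Concretely, I would take the data to be the bilinear surface $y_{i,j}=i\,j$ (the product of the two pixel coordinates) on a grid with $m,n\geq 3$. First I would verify that the candidate solution $w=y$ with all edges dormant ($x_e=0$ for every $e\in E$) is feasible: substituting $w_{i,j}=ij$ yields $\nabla_r^2 w_{i,j}=i(j-1)-2ij+i(j+1)=0$ and $\nabla_c^2 w_{i,j}=(i-1)j-2ij+(i+1)j=0$ at every interior index, so~\eqref{TV21}--\eqref{TV22} hold, and the multicut constraints~\eqref{multic} are satisfied vacuously since no edge is active. Because both the data term and the regularization term of~\eqref{model2} are nonnegative, this candidate attains the global lower bound $0$ of the objective and is hence an optimal solution, for any $\lambda\in(0,1)$.

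Next I would read off the implied segmentation and the associated affine function. With no active edge the entire grid forms a single segment $D_1=D$, and fitting an affine function $f(z)=\beta_1 z_1+\beta_2 z_2+\beta_0$ on $D_1$ cannot reproduce $y_{i,j}=ij$ exactly, since every affine $f$ satisfies the parallelogram identity $f(z_{i,j})+f(z_{i+1,j+1})=f(z_{i,j+1})+f(z_{i+1,j})$, whereas $y_{i,j}=ij$ violates it by $+1$ on every $2\times 2$ block. Consequently the best (least-absolute) affine fit leaves a strictly positive residual at some pixel, so $f(z_{i,j})\neq w_{i,j}$ for at least one $(i,j)$; that is, \eqref{potts1} fails at the optimal solution of~\eqref{model2}--(\ref{model2}e). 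This exhibits an instance on which the formulation returns a valid segmentation and a denoised $w$ yet the per-segment affine function it implicitly defines disagrees with $w$, establishing that the model is only approximate.

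I do not anticipate a genuine obstacle, since the argument reduces to verification on a single explicit instance. The only steps that merit care are the characterization of the feasible interior behaviour (confirming that the discrete row/column second-derivative conditions admit precisely the bilinear family, which strictly contains the affine family) and the clean certification---via the parallelogram identity above---that no affine function can fit $y=ij$ with zero error. Both are elementary, so the main effort is simply presenting the instance and checking optimality of $w=y$.
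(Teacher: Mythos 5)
Your proof is correct, and it rests on the same key insight as the paper's: constraints~(\ref{model2}a)--(\ref{model2}b) only enforce affineness separately along rows and columns, which admits the strictly larger bilinear family $w_{i,j}=\alpha ij+\beta i+\gamma j+\delta$, so $w$ need not be affine on a segment. The execution differs, though, and in a useful way. The paper's proof posits an optimal solution $x^\star$ whose segmentation contains a $3\times 3$ segment, assumes two rows of that segment carry different slopes $a_1\neq a_2$, and derives the contradiction $w_{00}-2w_{11}+w_{22}=2(a_2-a_1)\neq 0$; it never exhibits explicit data $y$ for which such an $x^\star$ is actually optimal, so strictly speaking it demonstrates that the \emph{constraint set} permits non-affine $w$ within a segment rather than fully certifying a counter-example. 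Your construction closes exactly that gap: with $y_{i,j}=ij$ the candidate $(w,x)=(y,\mathbf{0})$ is feasible (both discrete second derivatives vanish, and the multicut constraints~(\ref{model2}c) hold vacuously), attains the objective value $0$, which is the global lower bound for $\lambda\in(0,1)$, and is in fact the unique optimum since objective $0$ forces $w=y$ and $x=\mathbf{0}$; the parallelogram identity $f(z_{i,j})+f(z_{i+1,j+1})=f(z_{i,j+1})+f(z_{i+1,j})$, which every affine $f$ satisfies but $y=ij$ violates by $1$, then cleanly rules out \eqref{potts1}. What the paper's version buys instead is a picture of the failure mode on a non-trivial segmentation (two genuinely different planes meeting in one image, as in Figure~\ref{multicut2}), which connects to the practical setting; what yours buys is a fully self-contained, verifiable counter-example with an optimality certificate. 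Both establish the theorem.
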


\begin{proof}
We prove this theorem by constructing a counter-example where the optimal solution $w^\star$ of~\eqref{model2}-(\ref{model2}e) within one segment does not lie in any affine function $f^i$ with respect to the coordinates $z$.

We construct an optimal solution $x^\star$ which corresponds to the segmentation in the right image of Figure~\ref{multicut2}, where the $9$ nodes on the top left corner form a segment. We restrict ourselves to this segment where the integer coordinates of the pixels range from $(0,0)$ to $(2,2)$.

By constraint~\eqref{TV21}, the $w^\star$ of the $3$ nodes on each row satisfy the same linear function. Assume the linear function in the first and second row of nodes satisfy  $w=a_1z+b_1$ and $w=a_2z+b_2$, where $(a,b)$ are the linear parameters and $z$ the discrete coordinates that range from $0$ to $2$ in this case. Then the fitting value $w^\star$ of the $6$ nodes on the first two rows are listed in the following matrix:

\[
\begin{bmatrix}
    w_{00}& w_{01} & w_{02}  \\
    w_{10} & w_{11} & w_{12}
\end{bmatrix}=
\begin{bmatrix}
    b_1 & a_1+b_1 & 2a_1+b_1  \\
    b_2 & a_2+b_2 & 2a_2+b_2
\end{bmatrix}.
\]

We can then compute $w_{22}$ using constraint~\eqref{TV22}, where $w_{22} = 2w_{12} - w_{02}=4a_2+2b_2-2a_1-b_1$.
We note that if $w^\star$ of the $9$ nodes lies in any affine function $f^i$, then $
w_{00}-2w_{11}+w_{22} = 0.
$

However, we have
$w_{00}-2w_{11}+w_{22} = 2(a_2-a_1),
$ which is a contradiction when  $a_1\neq a_2$.
Thus we complete the proof.

\end{proof}

Although the MILP formulation~\eqref{model2}-(\ref{model2}e) is not exact on solving~\eqref{potts}, we still get a valid segmentation. We can then fit an affine function within each segment afterwards, thus obtaining a valid (although not optimal) piecewise affine function $f\in\mathcal{F}$ as post-processing.

\section{Solution Techniques}
\label{techniques}
We now introduce a heuristic and an exact algorithms to solve \eqref{model2}-(\ref{model2}e).

\subsection{Region fusion based heuristic algorithm}
\label{regionfusion2}
The resulting problem~\eqref{model2}-(\ref{model2}e) is a MILP, which is solved using any off-the-shelf commercial MIP solvers. The underlying sophisticated algorithms are based on the branch and cut algorithm, where a good global upper bound usually helps to improve the performance. In the following, we will introduce a fast heuristic algorithm that provides a valid segmentation. It was then given to~\eqref{model2}-(\ref{model2}e) and upon solving a linear program, its solution is served as a global upper bound.

Our heuristic is based on the region fusion algorithm~\cite{Fast} which approximates the Potts model~\eqref{potts0}. 
We start by performing parametric affine fitting over the $4$ groups ($2\times 2$ squared nodes) of each node, as shown in Figure~\ref{Regions}. We take the group that has the minimum fitting MSE, and assign the affine parameters (a vector of $3$ in $2$D case)  to that node. Note that nodes located on the boarders of the grid graph only have $2$ such groups, while corner nodes only have $1$ group.
\begin{figure}[t]
\center
\includegraphics[width=0.4\linewidth]{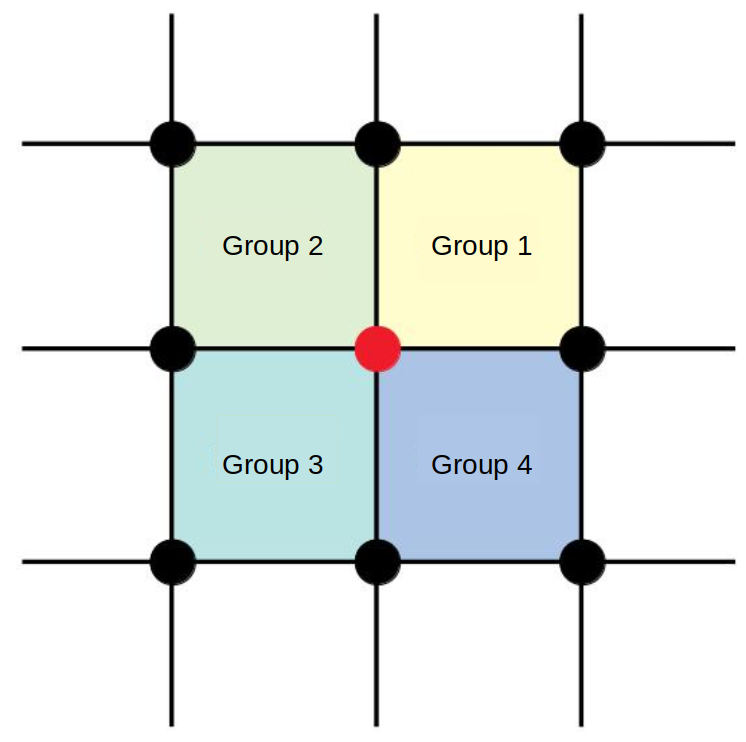}
\caption{Each node (colored red) has $4$ groups of $2\times 2$ squared nodes for affine fitting.}
\label{Regions}
\end{figure}
Our algorithm then starts with every node $i$ belonging to its own segment $V_i$, and for each pair of nodes, the following minimization problem is solved.
\begin{equation}
\min_{\w} \;\tau_i\left\Vert w_i - Y_i \right\Vert_2 + \tau_j\left\Vert w_j - Y_j \right\Vert_2+\kappa_t \gamma_{ij} \mathds{1}( w_i \neq w_j ), \label{l_0_region}
\end{equation}
where  $\mathds{1}(\cdot)$ denotes the indicator function, $\tau_i$  the number of nodes in segment $V_i\subseteq V$, and $\gamma_{ij}$ represents the
number of neighboring nodes between two segments $V_i$ and $V_j$. Here, $Y_i$ indicates the affine parameter of segment $V_i$, and $w_i$ the unknown variables, and $\kappa_t$ express the regularization parameter at the $k_{th}$ round of iteration.

To speed up computation, instead of  solving~\eqref{l_0_region} exactly, the following criteria is checked instead (see~\cite{Fast} for more detailed description):
  $$
\tau_i\tau_j \lVert Y_i-Y_j \rVert_2\leq \kappa \gamma_{i,j} \left( \tau_i + \tau_j \right).
$$
If the above condition holds, we merge segment $V_i$ and $V_j$, and the updated affine parameter (also the values of $w_i$ and $w_j$) is obtained by conducting a parametric affine fitting over the new segment. If not, the two segments and their affine parameters stay the same.

The algorithm iterates over each pair of nodes for solving~\eqref{l_0_region}, and the regularization parameter $\kappa$ grows over every round of iteration, which increasingly encourages merging. The algorithm stops after $t$ round of iteration, when $\kappa_t = \lambda$, where $\lambda$ is the pre-defined regularization parameter with respect to~\eqref{potts0}.

\subsection{Exact branch and cut algorithm}
\label{exact_bc}
Apart from the classical branch-and-cut algorithm inside the MIP solver, we describes below the cutting plane method that iteratively add lazy constraints from~\eqref{multic}.

\textbf{Cutting plane method}.
Similar to the cutting planes method that solves the multicut problem~\cite{kappesglobally}, we start solving~\eqref{model2}-(\ref{model2}e) by ignoring constraints~\eqref{multic}, or with few of them (e.g., the $4$ or $8$-edge cycle constraints). 

We then check the feasibility of the resulting solution  with respect to~\eqref{multic}. If it is already feasible, we are done and the optimal solution to~\eqref{model2}-(\ref{model2}e) is achieved. Otherwise, we identify the current separation problem and then add the corresponding violated constraints (cuts) to~\eqref{model2}-(\ref{model2}e). We resolve the updated MILP, and this procedure repeats until either we get the optimal solution, or the user-defined limit is reached.


\textbf{Separation problem}. 
Given an integer solution, it is polynomial to either check the feasibility with respect to~\eqref{multic}, or to identify and separate the integer infeasible solutions by adding violated constraints.

Phase $1$: Given the incumbent solution of the MILP~\eqref{model2}-(\ref{model2}e), we extract its binary solutions and remove edges where $x_e=1$ from the grid graph $G(V,E)$.  We thus obtain a new graph $G^\prime(V^\prime,E^\prime)$ where $V^\prime = V$, $E^\prime \subseteq E$ and we identify its connected components. We then check for each active edge to see if their two end nodes belong to the same component. If there exists any, the current solution is infeasible (and we call the corresponding active edges violated). Otherwise, a feasible and optimal solution is found.

Phase $2$: If violated edges exist, we search for violated constraints by finding paths between the two nodes of the edge. We first conduct a depth-first search on the graph $G^\prime$, and multiple such paths could be found. We set the maximum depth to $10$ to restrict the searching time. If the depth-first search does not return any path, we then switch to the breadth-first search to return only one shortest path. 

Phase $3$: For each violated edge, we add the corresponding multicut constraints~\eqref{multic} (possibly many) to our MILP~\eqref{model2}-(\ref{model2}e), where the left hand side corresponds to the paths found in phase $2$ .

\textbf{Facet-defining searching strategy}. 
The above mentioned strategy that finds violated constraints does not guarantee facet-defining inequalities. Recall that the multicut constraint~\eqref{multic} is facet-defining if and only if the corresponding cycle is chordless. In the facet-defining searching strategy, we in addition keep track of the non-parental ancestors set (denoted $S$) of the current node during search. When we search for the next node, we  make sure that the potential node does not form an edge (with respect to $G$) with any node in $S$.


\section{Computational Experiments}
\label{Computations2}
In this section, all the experiments are conducted on a desktop with Intel(R) Xeon(R) CPU E5-2620 v4 @ 2.10GHz CPU and 64 GB memory, using IBM ILOG Cplex V12.8.0 as the MIP optimization solver.

We develop and compare the following variants of~\eqref{model2}-(\ref{model2}e) and report their computational results. The experiments are based on synthetic images of different sizes, as well as real depth images. We normalize the intensity values of all images to $[0, 1]$, and  each experiment is conducted $3$ times and only the median of the results is reported. We report the running time, nodes of the branch and bound tree, optimality gap, cuts added and the objective function of the MILP.

\begin{itemize}
	\item \textbf{MP}: The MILP formulation of the piecewise affine fitting model~\eqref{model2}-(\ref{model2}e) that adds the multicuts without the facet-defining searching strategy.
	\item \textbf{MPH}: MP where we adopt the solution of our heuristic as an initial input.
	\item \textbf{MPH-4}: MPH with the $4$-edge cycle multicut constraints as initial inequalities.
	\item \textbf{MPH-4\&8}: MPH  with the $4$ and $8$-edge cycle multicut constraints.
	\item \textbf{MPH-F}: MPH with the facet-defining searching strategy.
\end{itemize}

\subsection{Automatic computation of parameters}
\textbf{Parameter} $\bm{\lambda}$ is the regularization term employed to avoid over-fitting in problem MP~\eqref{model2}-(\ref{model2}e). We set~$\lambda$  independently for each row and column, denoted $\lambda_{i}^r$ and $\lambda_{j}^c$, since intuitively, this may help adapt to local features. 
$\lambda$ is computed in a way to avoid making an outlier a one-node segment. 
Let $\lambda^r_i=\frac{1}{2}\xi \cdot \text{max}_i \;|\nabla^2 y^r_i|$ and $\lambda^c_j=\frac{1}{2}\xi \cdot \text{max}_j \;|\nabla^2 y^c_j|$, where $\xi$ is the user-defined parameter.
In this manner, if there exists an outlier $(i,j)$, making a one-node segment will active all four edges of $(i,j)$, thus incurring a penalty value of $2(\lambda^r_i+\lambda^c_j)$.

\textbf{Parameter} $\bm{M}$ is for the ``big M'' constraint in MP~\eqref{model2}-(\ref{model2}e). In principle, it should be big enough so that the  constraints~(\ref{TV21},\ref{TV22}) are always valid, i.e., $M = 2$. On the other hand, it should be not too big, or it may harm the tightness of the LP relaxation. The value of big M could be computed automatically each on row and column, following the strategy above.
However, we have tested different variants and found out the results only have slight fluctuations. Hence, we simply set $M=2$ globally.

\begin{figure}[t]
\centering
\includegraphics[width=0.32\columnwidth]{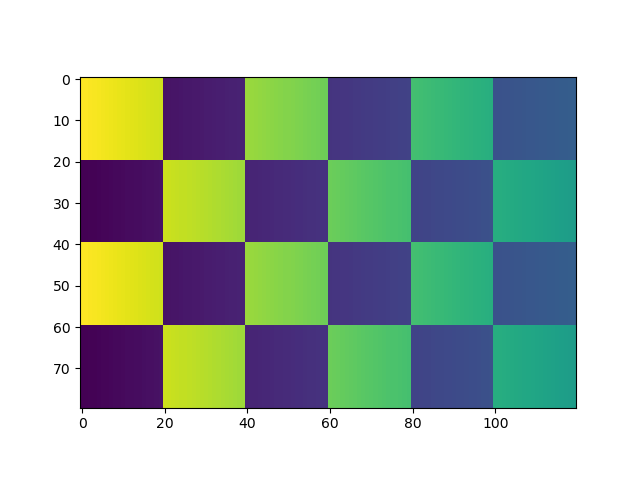}
\includegraphics[width=0.32\columnwidth]{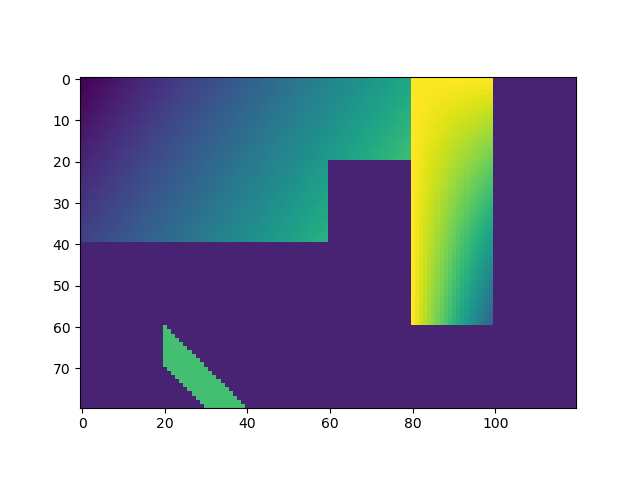}
\includegraphics[width=0.32\columnwidth]{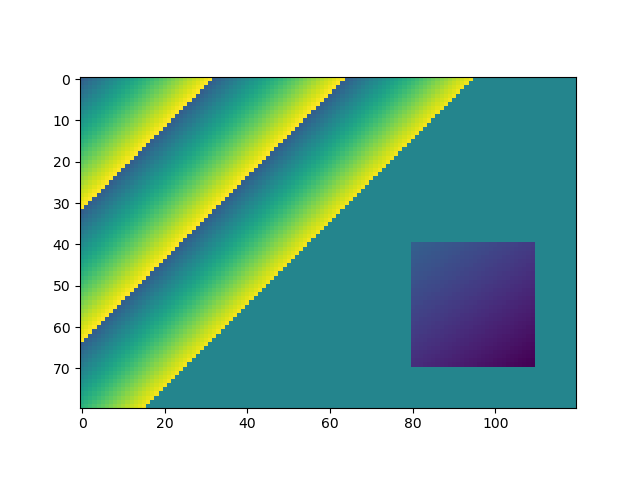}\\
\includegraphics[width=0.32\columnwidth]{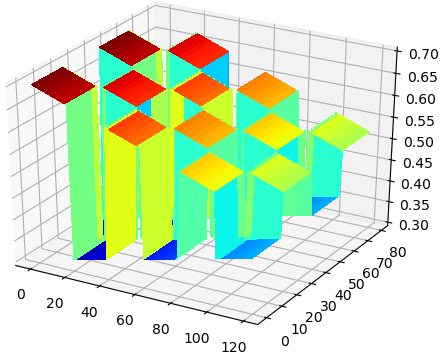}
\includegraphics[width=0.32\columnwidth]{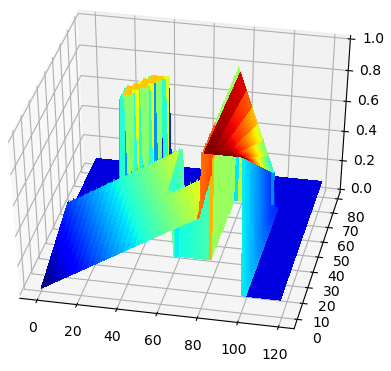}
\includegraphics[width=0.32\columnwidth]{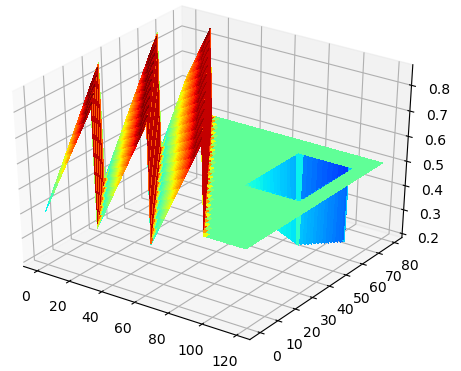}
\caption{Top: synthetic images with affine pieces, 2D view. Bottom: Their 3D views.}
\label{gray}
\end{figure}

\subsection{Detailed comparison on synthetic images}


In this section, we generate $3$ synthetic images that has affine trends, as shown in Figure~\ref{gray}. We then test different variants of our models on $3$ sizes of the images, i.e., $20\times 30$, $40\times 60$, and $80\times 120$. In addition, we further experiments on scenarios that add Gaussian noise of level $0$, $0.001$ and $0.005$. Thus, a total of $27$ tests ($81$ experiments, as we run each test $3$ times and only report the medium) are done for each model. We set the time limit of each experiment to $600$ seconds.

Before starting these $81$ experiment, we run additional experiments to select the ``right'' values of $\xi$. Since all three images already output optimal segmentation (with respect to the ground truth) results when $\xi =0.5$, we keep it fixed throughout this section to keep our comparison concise.

\begin{figure}[t]
\centering
\includegraphics[width=0.99\columnwidth]{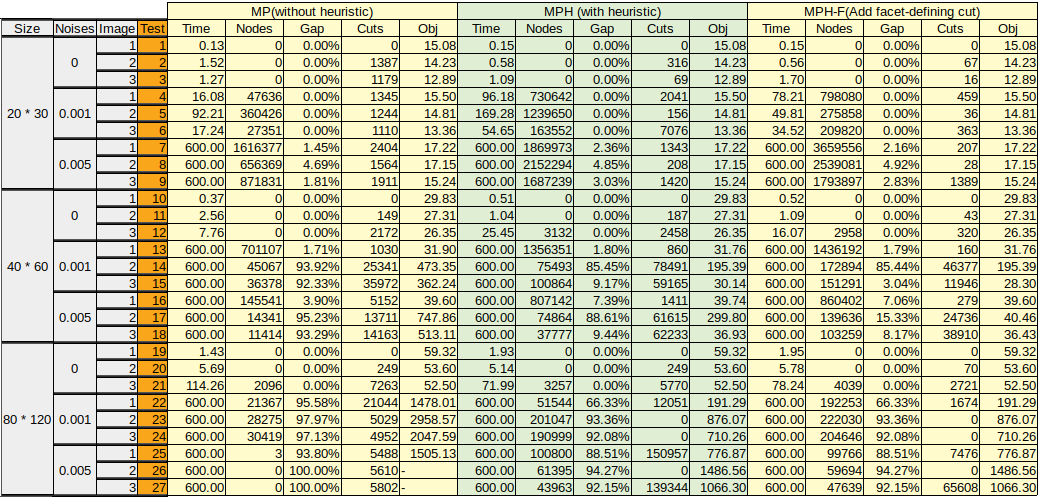}
\caption{Table on MP, MPH and MPH-F.}
\label{ex1-1}
\end{figure}

\subsubsection{MP vs MPH}
We first conduct experiments on solving MP with and without the heuristic algorithms  (introduced in Section~\ref{regionfusion2}) to the MIP solver. Our heuristic algorithm is fast to compute, takes $3$ seconds on average to converge on the $40\times 60$ sized images. Note that we only provide the MIP solver with initial integer solutions $x$ of problem~\eqref{model2}-(\ref{model2}e), hence it takes time for the solver to compute $w$ by solving a linear program.  

As we can see in the MP column of Figure~\ref{ex1-1}, MIP alone suffices to find optimal solutions in all tests when the image is clean (without Gaussian noise), even in $80\times 120$ size. It also reaches optimality on the $20\times 30$ images, with $0.001$ Gaussian noise added. However, without heuristic, no feasible solution are found in Test $26$ and $27$ within $600$ seconds.
The results in MPH column indicates that adding the result of the heuristic as initial solution to the MIP solver mostly improves the results. For instance, MPH helps reduce the optimality gap from $92.33\%$ to $9.17\%$ in Test $15$. It sometimes also reduce the performance, i.e., increases the running time of finding optimal solution from $16.08$ to $96.18$ seconds in Test $4$.

\subsubsection{MPH vs MPH-F}
Given an heuristic solution, we further test the performance of adopting the facet-defining searching strategy. Recall that although it takes more time to find a facet-defining multicut constraint~\eqref{multic} (as described in the facet-defining searching strategy), it is tighter compared to non facet-defining ones.
The results are shown in the MPH and MPH-F columns of Figure~\ref{ex1-1}, where we could see MPH-F performs better than MPH in most of the cases, with only a few exceptions. For instance, MPH-F helps reduce the running time from $169.28$ to $149.81$ seconds in Test $5$. MPH-F also reduces the optimality gap from $88.61$\% to $15.33$\% in Test $17$.



\begin{figure}[t]
\centering
\includegraphics[width=0.99\columnwidth]{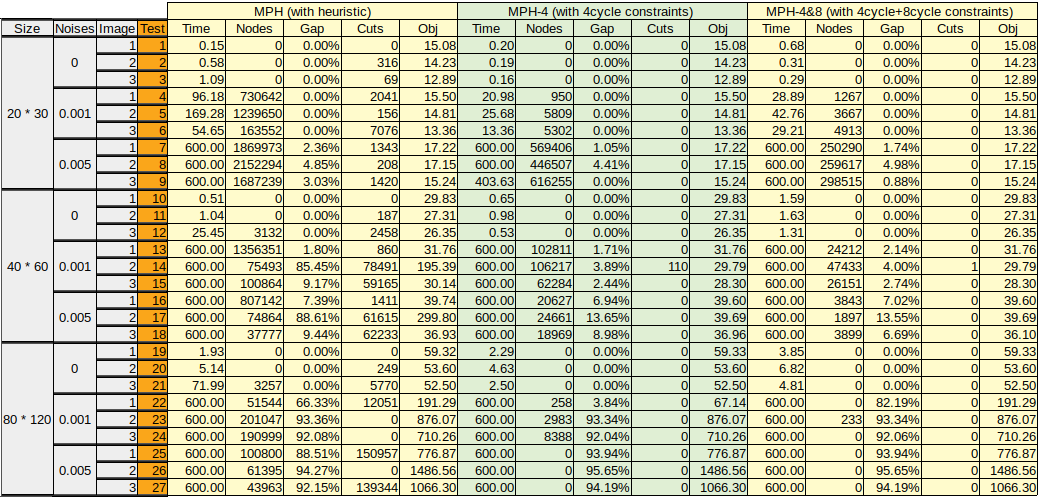}
\caption{Table on MPH, MPH-4 and MPH-4\&8.}
\label{ex1-2}
\end{figure}

\subsubsection{MPH vs MPH-4 and MPH-4\&8}
We compare whether adding few facet-defining multicut constraints as initial constraints to MPH improves computation. We test the performance of adding only $4$-cycle constraints (MPH-4) and adding both $4$-cycle and $8$-cycle (MPH-4\&8). The results are shown in Figure~\ref{ex1-2}. We notice that after adding these cycle constraints, Cplex rarely add any additional cuts to MPH. We also note that in general, adding $4$-cycle constraints helps on improving the performance. For instance, MPH-4 reduces the optimality gap significantly on test $14$, test $17$ and test $22$. In addition, compared to MPH-4, the experiments shows that adding the $8$-cycle constraints seems harmful in most cases.

\subsubsection{Results on segmentation and denoising}

Upon solving our MILP~\eqref{model2}-(\ref{model2}e), the active edges ($x_e=1$) together with the multicut constraints~\eqref{multic} form a valid segmentation, and the fitting variables ($w$) removes noise.
Although only an approximate formulation, the segmentation results of most tests (except for Test $25$-$27$) already achieve ``optimal'' compared to the ground truth. An illustration of the denoising results (as well as segmentation) can be seen in Figure~\ref{ex1-3}, where the first row are the $40\times 60$ images with $0.005$ Gaussian noise, and second row the results from MPH-$4$.

\begin{figure}[h]
\centering
\includegraphics[width=0.32\columnwidth]{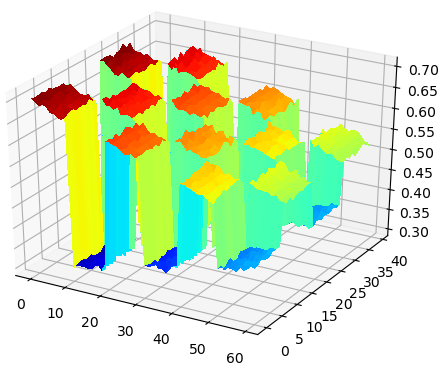}
\includegraphics[width=0.32\columnwidth]{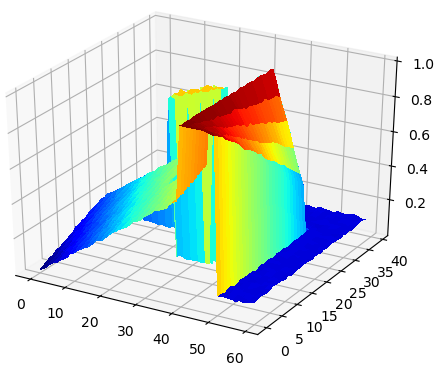}
\includegraphics[width=0.32\columnwidth]{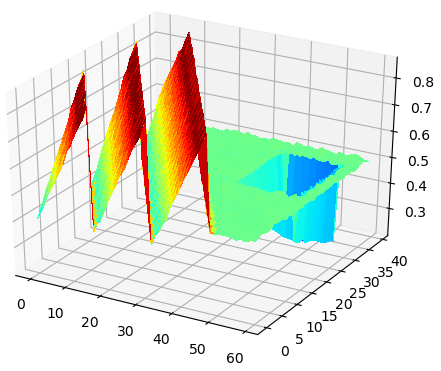}\\
\includegraphics[width=0.32\columnwidth]{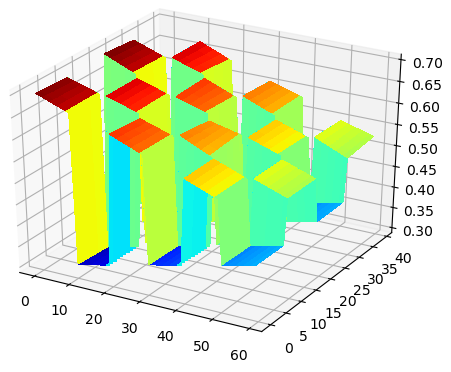}
\includegraphics[width=0.32\columnwidth]{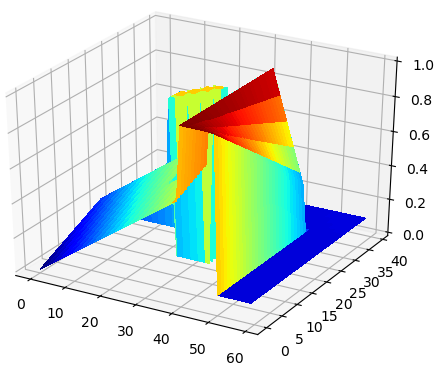}
\includegraphics[width=0.32\columnwidth]{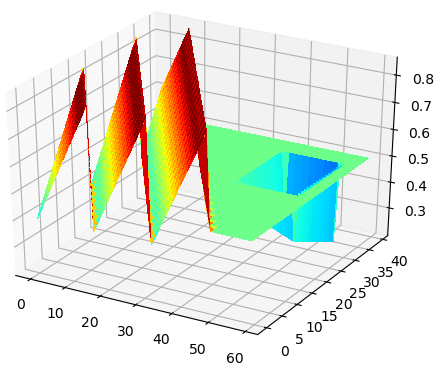}
\caption{Top: images ($40\times 60$) with Gaussian noise. Bottom: results from MPH-4.}
\label{ex1-3}
\end{figure}

\subsection{Detailed comparison on real images}
We further conduct experiments on two real depth images with $2$ different sizes ($600$ pixels and $2400$ pixels), which are generated from the disparity maps of the Middlebury data set~\cite{4270216} (shown in Figure~\ref{image:real}). 

\begin{figure}[h]
\centering
\includegraphics[width=0.4\columnwidth]{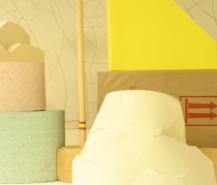}
\includegraphics[width=0.39\columnwidth]{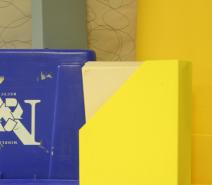}\\
\includegraphics[width=0.4\columnwidth]{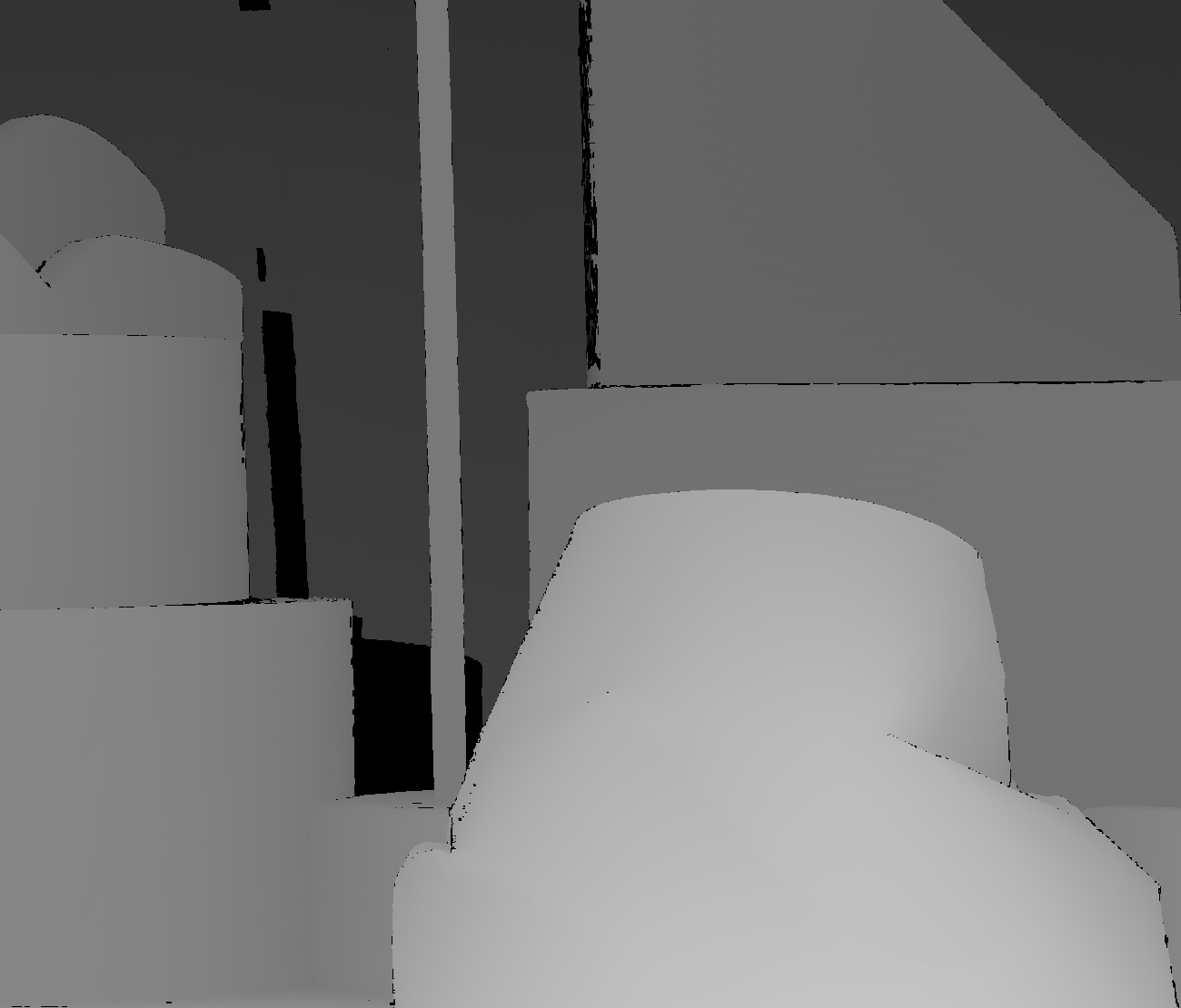}
\includegraphics[width=0.39\columnwidth]{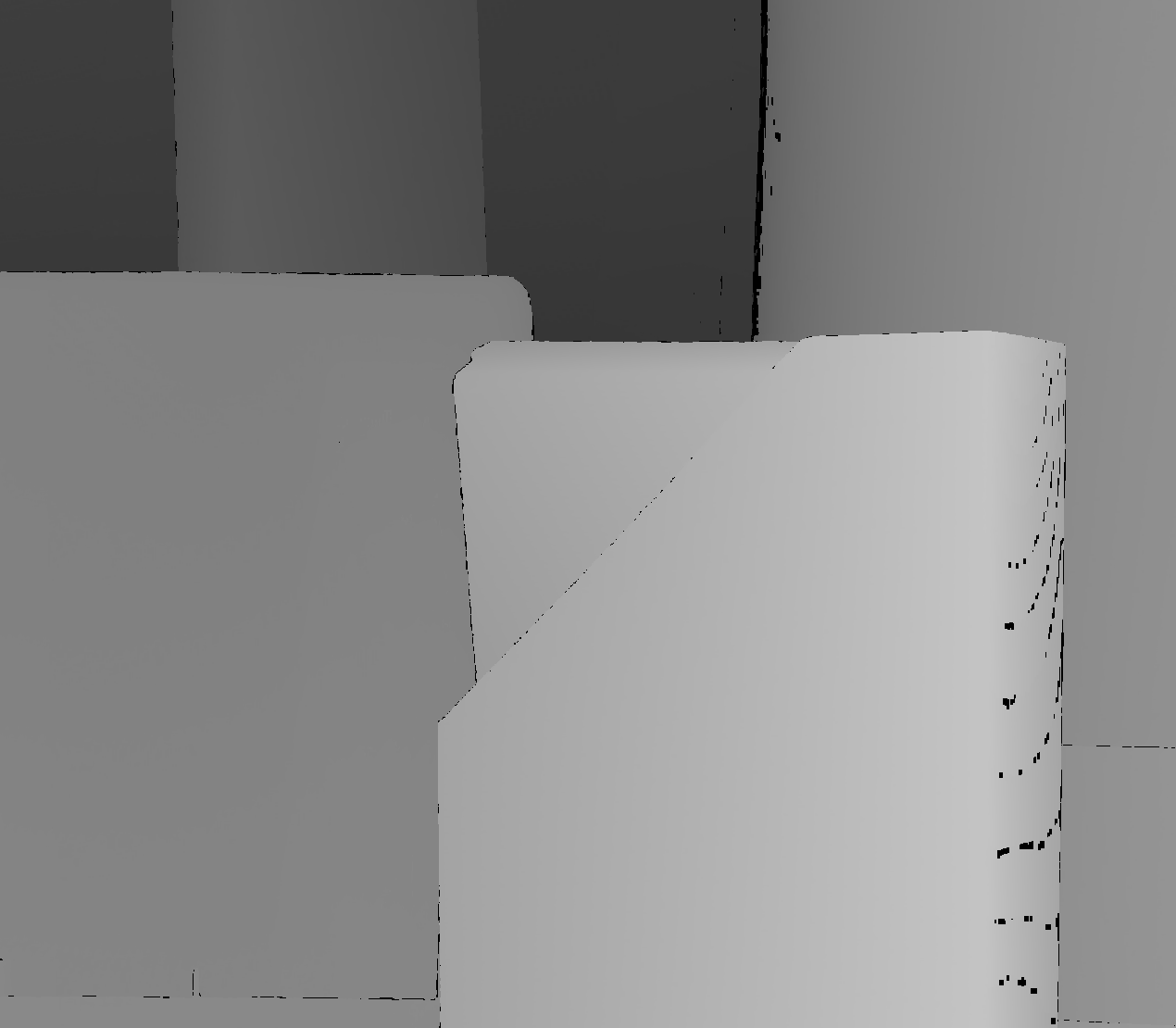}
\caption{Top: Two images fromfrom~\cite{4270216}. Bottom: their disparity maps.}
\label{image:real}
\end{figure}

According to the performance  of the models in previous section, we choose to test different variants (with respect to $\xi$ and time limit) of MPH-4-F (MPH with the $4$-edge cycle multicut constraints using the facet-defining searching strategy). Since real images already contain noise, we do not add extra noise. We also run each experiment $3$ times and only report the medium. All the results are shown in Figure~\ref{table3}.

\begin{figure}[h]
\centering
\includegraphics[width=0.99\columnwidth]{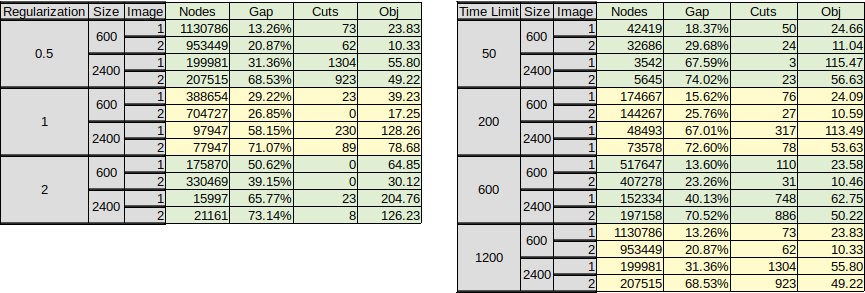}
\caption{Table of tests on MPH-4-F with different regularization parameters $\xi$ and time limits.}
\label{table3}
\end{figure}

\subsubsection{Regularization parameter $\xi$}
The regularization parameter $\xi$ is introduced to penalize the perimeter as well as the number of segments. The larger $\xi$ is, the fewer the segments are.
In this section, we conduct experiments on using $3$ different value of parameter $\xi$ ($0.5$, $1$ and $2$), and the time limit is set to $1200$ seconds. 

The computational results are shown in the left table of Figure~\ref{table3}. However, since the objective functions contain both fitting and regularization terms,  their absolute values is not comparable. Instead, we   visualize the segmentation  results  in Figure~\ref{ex:real}. It is obvious to see that the number of segments decreases as $\xi$ increases.

\begin{figure}[h]
\centering
\includegraphics[width=0.32\columnwidth]{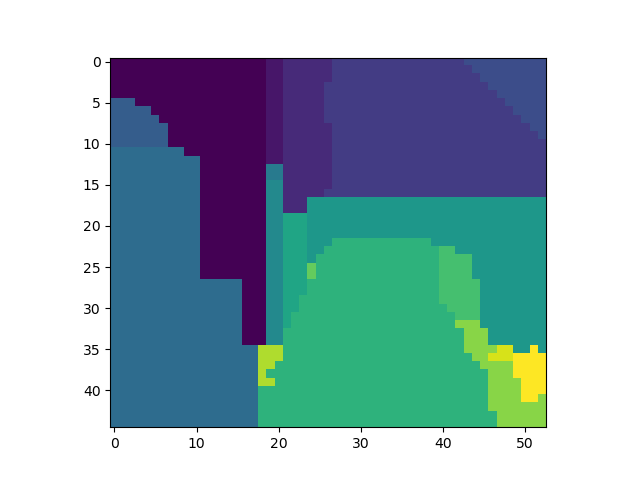}
\includegraphics[width=0.32\columnwidth]{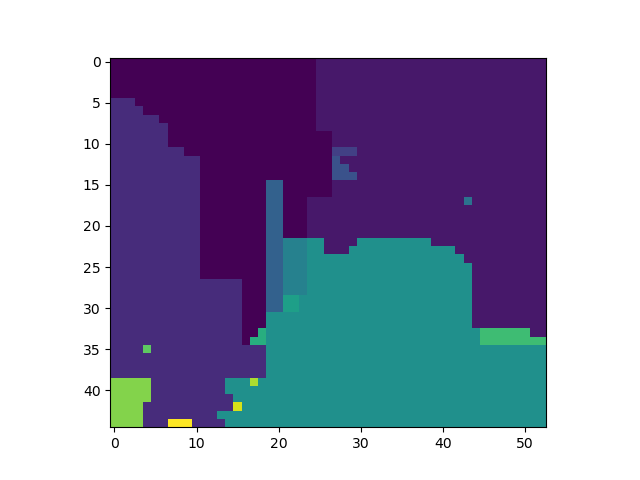}
\includegraphics[width=0.32\columnwidth]{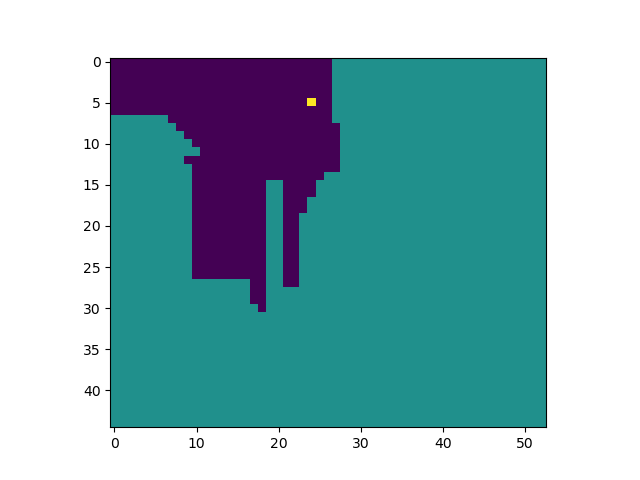}\\
\includegraphics[width=0.32\columnwidth]{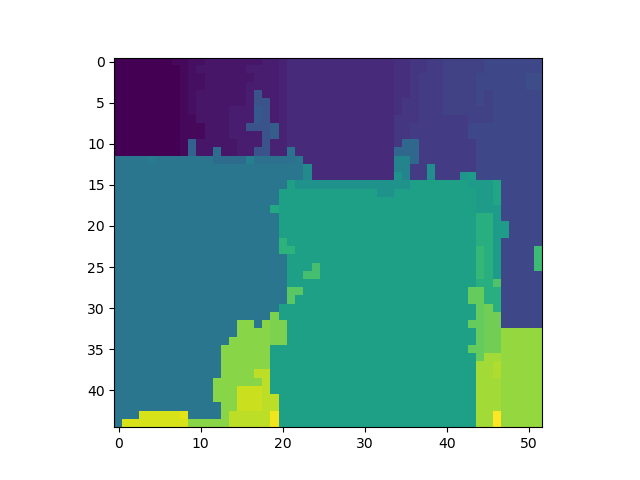}
\includegraphics[width=0.32\columnwidth]{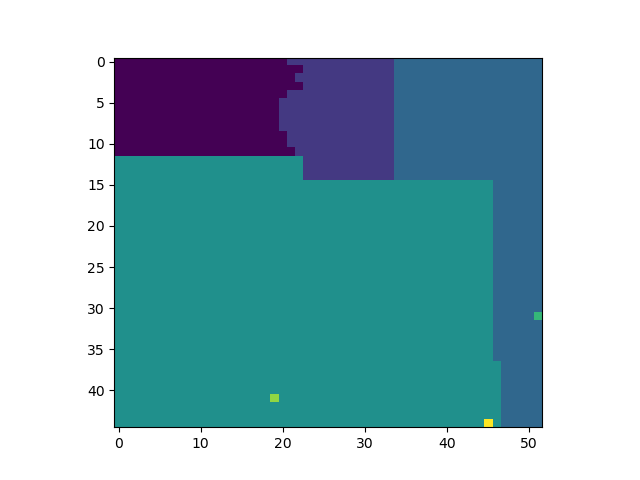}
\includegraphics[width=0.32\columnwidth]{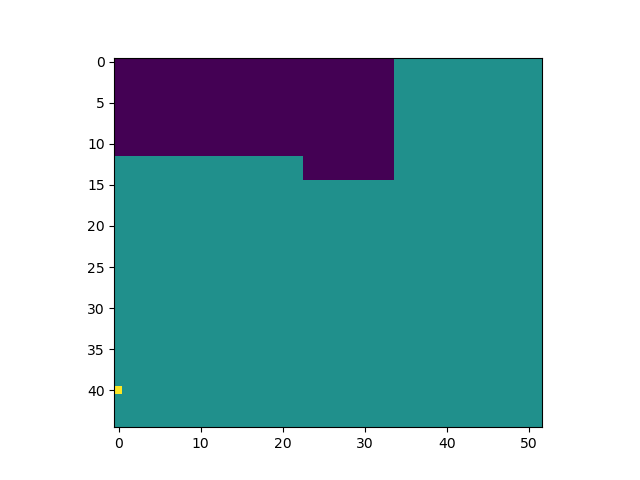}
\caption{Segmentation results as the $\xi$ increases from left to right.}
\label{ex:real}
\end{figure}

\subsubsection{Time limit}

In this section, we conduct experiments on adopting $4$ time limits ($50$, $200$, $600$ and $1200$ seconds), and we set $\xi=0.5$. The computational results are shown in the right table of Figure~\ref{table3}. Since none of tests finds the optimal, the performance could possibly be further improved by extending the time limit. In addition, a shorter time limit is still possible to produce a solution with acceptable gap, especially for images witch smaller size. Figure~\ref{time limit} visualizes the optimality gap with respect to time limit. As can be predicted, when time limit increases, the optimality gap drops.

\begin{figure}[h]
\centering
\includegraphics[width=0.62\columnwidth]{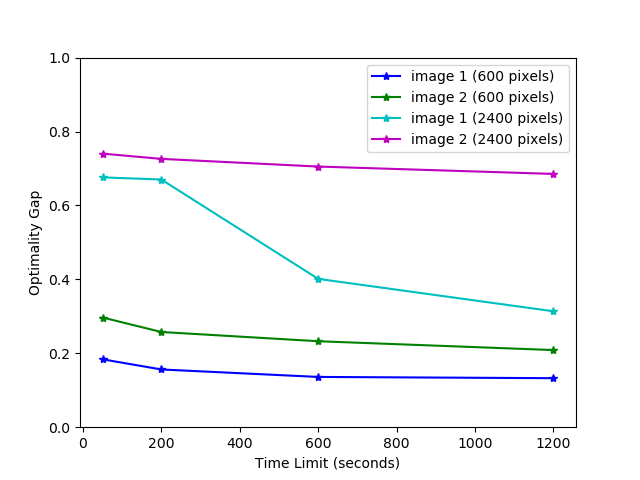}
\caption{Optimality gap decreases as time limit increases.}
\label{time limit}
\end{figure}

\section{Conclusions}
\label{sec:conclusion}
In this paper, we have presented a unsupervised and non-parametric model that  approximates a discontinuous piecewise affine function to fit the given data. We formulate it as a MIP and solve it with a standard optimization solver. Although not an exact model in $2$D, the inclusion of multicut constraints enables a feasible segmentation of the image domain. Thus,  a corresponding piecewise affine function can be easily reconstructed.

The computational complexity is the main bottleneck of our approach. To tackle with it, we add two different sets of facet-defining inequalities to our MIP. We also implemented a special heuristic algorithm that finds a feasible segmentation, which is used as an initial integer solution to the MIP solver.
We conducted extensive experiments on different variants of our model and study the effects of adjusting model parameters. We demonstrate the feasibility of our approach by its  applications to segmentation and denoising   on both synthetic and real depth images.

As for future work, the $8$-neighbor relations of the square grid graph in $2$D is worth investigating, as well as its generalization to 3$D$ images. Furthermore, we will extend this work beyond the scope of image segmentation and denoising to deal with other applications, such as signal compression\cite{Duarte2012SignalCI} and optical flow~\cite{8417969}.

%
%
%
%
\bibliographystyle{splncs04}
\bibliography{research}

\end{document}